\documentclass[11pt]{amsart}

\usepackage[margin=1in]{geometry}
\usepackage[T1]{fontenc}
\usepackage[utf8]{inputenc}
\usepackage{amsmath,amssymb,amsthm}
\usepackage{xcolor}
\usepackage{tikz}
\usepackage{aliascnt}
\usepackage{hyperref}
\usepackage{cleveref}

\hypersetup{
  colorlinks=true,
  linkcolor=blue!60!black,
  citecolor=blue!60!black,
  urlcolor=blue!60!black
}

\allowdisplaybreaks
\numberwithin{equation}{section}

\newtheorem{theorem}{Theorem}[section]
\newaliascnt{proposition}{theorem}
\newtheorem{proposition}[proposition]{Proposition}
\aliascntresetthe{proposition}
\newaliascnt{lemma}{theorem}
\newtheorem{lemma}[lemma]{Lemma}
\aliascntresetthe{lemma}
\newaliascnt{corollary}{theorem}
\newtheorem{corollary}[corollary]{Corollary}
\aliascntresetthe{corollary}
\newaliascnt{conjecture}{theorem}
\newtheorem{conjecture}[conjecture]{Conjecture}
\aliascntresetthe{conjecture}
\theoremstyle{definition}
\newaliascnt{definition}{theorem}
\newtheorem{definition}[definition]{Definition}
\aliascntresetthe{definition}
\newaliascnt{example}{theorem}
\newtheorem{example}[example]{Example}
\aliascntresetthe{example}
\theoremstyle{remark}
\newaliascnt{remark}{theorem}
\newtheorem{remark}[remark]{Remark}
\aliascntresetthe{remark}

\crefname{theorem}{Theorem}{Theorems}
\crefname{proposition}{Proposition}{Propositions}
\crefname{lemma}{Lemma}{Lemmas}
\crefname{corollary}{Corollary}{Corollaries}
\crefname{conjecture}{Conjecture}{Conjectures}
\crefname{definition}{Definition}{Definitions}
\crefname{example}{Example}{Examples}
\crefname{remark}{Remark}{Remarks}
\crefname{figure}{Figure}{Figures}
\Crefname{theorem}{Theorem}{Theorems}
\Crefname{proposition}{Proposition}{Propositions}
\Crefname{lemma}{Lemma}{Lemmas}
\Crefname{corollary}{Corollary}{Corollaries}
\Crefname{conjecture}{Conjecture}{Conjectures}
\Crefname{definition}{Definition}{Definitions}
\Crefname{example}{Example}{Examples}
\Crefname{remark}{Remark}{Remarks}
\Crefname{figure}{Figure}{Figures}
\crefformat{equation}{(#2#1#3)}
\Crefformat{equation}{Equation #2(#1)#3}

\newcommand\wt{\operatorname{wt}}
\newcommand\Par{\operatorname{Par}}

\newcommand{\qbinom}[2]{\genfrac{[}{]}{0pt}{}{#1}{#2}_{q}}
\newcommand{\qp}[2]{(#1;q)_{#2}}

\newcommand{\PAW}{P^{\mathrm{AW}}}
\newcommand{\PZ}{P^{Z}}

\tikzset{
  awlane/.style={draw=purple!70!black,line width=1.15pt,line cap=round},
  awstep/.style={draw=red!70!black,line width=1.05pt,line cap=round},
  awlabel/.style={text=red!70!black,font=\scriptsize,inner sep=1pt},
  awendpoint/.style={text=teal!60!black,font=\large\bfseries,inner sep=2pt}
}

\title{Askey--Wilson polynomials with ASEP parameters}

\author{Younggwang Cho}
\address{Department of Mathematics, Sungkyunkwan University, Suwon, South Korea}
\email{brglory@g.skku.edu}

\author{Donghyun Kim}
\address{Department of Mathematics, Ewha Womans University, Seoul, South Korea}
\email{kdh310@ewha.ac.kr}

\author{Jang Soo Kim}
\address{Department of Mathematics, Sungkyunkwan University, Suwon, South Korea}
\email{jangsookim@skku.edu}

\thanks{D.~H.~Kim was supported by the Ewha Womans University Research Grant of 2026.
Y.~Cho and J.~S.~Kim were supported by the National Research Foundation of Korea (NRF) grant funded by the Korea government (RS-2025-00557835).}

\keywords{Askey--Wilson polynomial, Koornwinder moment, asymmetric simple exclusion process,
 Rains' positivity conjecture}
\subjclass[2020]{Primary: 05A15; Secondary: 05A19, 33D45, 82C22}

\begin{document}

\begin{abstract}
Koornwinder moments generalize the Askey--Wilson moments arising in the asymmetric simple exclusion process. Rains conjectured that, under the specialization $t=q$, the minimal numerators of Koornwinder moments have nonnegative integer coefficients. While the one-row case of this conjecture was previously proved by Corteel, Mandelshtam, and Williams using rhombic staircase tableaux, its dual counterpart, the one-column case, has remained open. In this paper, we derive a closed, manifestly positive combinatorial formula for the normalized numerators of the coefficients of the rescaled Askey--Wilson polynomials. This proves Rains' conjecture for one-column partitions, thereby providing the exact dual counterpart to the previous result.
\end{abstract}

\maketitle

\section{Introduction}

The interplay between particle systems arising in statistical
mechanics and algebraic combinatorics has seen remarkable progress
over the past three decades. A celebrated milestone in this development
is the exact solvability of the asymmetric simple exclusion process
(ASEP) with open boundaries. In the pioneering work of Sasamoto
\cite{Sasamoto1999} and Uchiyama, Sasamoto, and Wadati \cite{USW2004},
the partition function of the one-species
ASEP was shown to be governed by the moments of Askey--Wilson
polynomials, the master family of orthogonal polynomials sitting at
the top of the hierarchy of basic hypergeometric orthogonal
polynomials \cite{GR, KLS}. This algebraic connection was subsequently
enriched with deep combinatorial meaning by Corteel and Williams
\cite{Corteel2011}, who introduced staircase tableaux to give a
combinatorial interpretation for the steady-state probabilities of the
system.

For multi-species generalizations of the exclusion process, the
relevant orthogonal families naturally lift from Askey--Wilson
polynomials to Koornwinder polynomials, also called
Macdonald--Koornwinder polynomials of type \(BC\)
\cite{Koornwinder1992}, which are multivariate orthogonal polynomials
with parameters \( a,b,c,d,q \) and \( t \). To study the two-species
ASEP on a line, Corteel and Williams \cite{Corteel2019} introduced the
\emph{Koornwinder moments}
\(M^Z_\lambda = M^Z_\lambda(\xi; \alpha, \beta, \gamma, \delta; q) \)
under the specialization \(t=q\). These moments are rational functions
in \( \xi,\alpha,\beta,\gamma,\delta,q \) and are indexed by
partitions \(\lambda\) whose parts are allowed to be zero. This
convention is physically significant: while \(M^Z_{(n)}\) yields the
partition function \(Z_n\) of the classical ASEP, \(M^Z_{(n-k,0^k)}\)
yields the partition function \(Z_{n,k}\) of the two-species ASEP on
\(n\) sites with light and heavy particles, exactly \(k\) of which are
light. For more details on the multi-species ASEP and Koornwinder
polynomials, we refer to \cite{Cantini2017}.

Corteel, Mandelshtam, and Williams \cite{CMW} gave a combinatorial
interpretation of \(Z_{n,k}\) using rhombic staircase tableaux. This
interpretation implies that the minimal numerator of
\(M^Z_{(n-k,0^k)}\) is a polynomial in
\(\xi,\alpha,\beta,\gamma,\delta,q\) with nonnegative integer
coefficients. Generalizing this result, Rains \cite[Conjecture
4.4]{Corteel2019} formulated a positivity conjecture: the minimal
numerator of any Koornwinder moment \(M^Z_\lambda\) is a polynomial in
the parameters \(\xi,\alpha,\beta,\gamma,\delta,q\) with nonnegative
integer coefficients.

The following cases of Rains' conjecture are known:
\begin{itemize}
\item Rains' conjecture was proved for \emph{one-row partitions}
  \(\lambda = (n-k,0^k)\) by Corteel, Mandelshtam, and Williams
  \cite{CMW} using rhombic staircase tableaux. The case without zero
  parts, \(\lambda=(n)\), had been proved earlier using staircase
  tableaux in \cite{Corteel2011}.
\item For an arbitrary partition \(\lambda\), Rains' conjecture
  has recently been established under the parameter specializations
  \((\xi,q)=(1,0)\) and \((\xi,q)=(1,1)\) in \cite{Cho2026b}.
\end{itemize}

In this paper, we resolve Rains' conjecture for \emph{one-column
  partitions} \(\lambda = (1^{n-k},0^k)\), thereby providing the
natural dual counterpart to the result of Corteel, Mandelshtam, and
Williams \cite{CMW} for one-row partitions.

The Koornwinder moment \(M^Z_{(n-k,0^k)}\) for a one-row partition can
also be understood as the corresponding mixed moment of the
rescaled Askey--Wilson polynomial. Similarly, the Koornwinder moment
\(M^Z_{(1^{n-k},0^k)}\) for a one-column partition can be understood as
the corresponding coefficient of the rescaled Askey--Wilson polynomial up to
sign. Hence,
our main object of study is the normalized numerator
\(\widetilde{\nu}^{Z}_{n,k}\) of the coefficient of \(x^k\) in the
rescaled Askey--Wilson polynomial.

Our main result is a manifestly positive formula for
\( \widetilde{\nu}^{Z}_{n,k} \) using lattice paths
(\Cref{thm:ASEP-positivity}). This implies that Rains' conjecture
holds for one-column partitions.

A principal reason that the formula in \Cref{thm:ASEP-positivity} was
difficult to find is that these numerators
\( \widetilde{\nu}^{Z}_{n,k} \) do not appear to arise from a single
global weighted lattice that works simultaneously for every degree
\(n\). Our solution is to use a different finite lattice-path model
\(\mathcal{G}_n\) for each \(n\): the edge weights depend
intrinsically on \(n\), and for each fixed \(n\), paths in the graph
\(\mathcal{G}_n\) give \(\widetilde{\nu}_{n,k}^Z\) for all
\(0\le k\le n\). In \Cref{sec:appendix}, we explain how we obtained
the lattice path model.

\begin{remark}
  Our result is different from, and independent of, the lattice-path
  formula in \cite{Corteel2026a}. First, in \cite{Corteel2026a}, the
  authors study the standard monic Askey--Wilson polynomials
  \(\PAW_n(x)\), whereas we study the rescaled Askey--Wilson
  polynomials \(\PZ_n(x)\) tailored to the two-species ASEP. Second,
  the coefficients of \(\PAW_n(x)\) are rational functions in
  \(a,b,c,d,q\), and the formula in \cite{Corteel2026a} uses infinite
  lattice paths to represent their series expansions. By contrast, our
  model is finite and directly describes the minimal numerator
  \( \widetilde{\nu}^{Z}_{n,k} \) of the coefficient.
\end{remark}

The rest of this paper is organized as follows. In
\Cref{sec:preliminaries}, we provide necessary definitions, review the
connection between ASEP and Askey--Wilson moments and their
generalization to two-species ASEP and Koornwinder moments. In
\Cref{sec:comb-formula}, we state our main theorem, a combinatorial
model for \( \widetilde{\nu}^{Z}_{n,k} \). In \Cref{sec:proof-main},
we prove the main theorem. Finally, \Cref{sec:appendix} describes the
heuristic process behind the discovery of our formula.

\subsection*{Acknowledgments}
The second author is grateful to Sylvie Corteel and Lauren Williams
for introducing him to the positivity problem for the coefficients of
rescaled Askey--Wilson polynomials. The authors would like to thank
Minho Song for helpful discussions.

\subsection*{Declaration on Generative AI}

The authors used the ChatGPT 5.5, 5.6, and 6 Pro models to assist in
finding and proving \Cref{lem:inner-sum} and \Cref{lem:outer-sum},
drawing diagrams, writing Sage code, and polishing the paper. All
other mathematical content is solely the work of the authors. In
particular, the formula in the main thereom
(Theorem~\ref{thm:ASEP-positivity}) was originally discovered by the
second author in 2020 without using any AI tools.

\section{Preliminaries}\label{sec:preliminaries}

In this section, we introduce basic definitions, review previous
results, and state Rains' conjecture.

A \emph{partition} is a weakly decreasing sequence
\( \lambda=(\lambda_1,\dots,\lambda_\ell) \) of positive integers.
Each \( \lambda_i \) is called a \emph{part}. Let \( \Par_m \) denote
the set of partitions with at most \( m \) parts. When
\( \lambda \in \Par_m \), we regard it as the \( m \)-tuple
\( \lambda=(\lambda_1,\dots,\lambda_m) \) obtained by appending zero
parts if necessary. We write \(a^k\) for \(k\) consecutive copies of
\(a\). For example, \( (4,1,1) =(4,1,1,0,0,0,0)= (4,1^2,0^4)\in\Par_7 \).

The \emph{Young diagram} of a partition
\( \lambda=(\lambda_1,\dots,\lambda_\ell) \) is a left-justified array
of unit squares in which there are \( \lambda_i \) squares in the
\( i \)th row from the top. For example, the Young diagram of
\( \lambda=(4,3,1) \) is drawn as follows.
\begin{center}
    \begin{tikzpicture}[scale=0.5]
    \foreach \x in {0,1,2,3} {
      \draw (\x,0) rectangle ++(1,-1);
    }
    \foreach \x in {0,1,2} {
      \draw (\x,-1) rectangle ++(1,-1);
    }
    \draw (0,-2) rectangle ++(1,-1);
  \end{tikzpicture}
\end{center}
In view of this, a \emph{one-row partition} is a partition of the form
\( (r) \) and a \emph{one-column partition} is a partition of the form
\( (1^r) \).

We use the following notation for \( q \)-series:
\begin{equation*}
 (x;q)_r = \prod_{i=0}^{r-1}(1-xq^i),\quad
 (x_1,\ldots,x_m;q)_r = \prod_{i=1}^{m}(x_i;q)_r, \quad
 [r]_q=\frac{1-q^r}{1-q},\quad
\qbinom{r}{s} = \frac{(q;q)_r}{(q;q)_s (q;q)_{r-s}},
\end{equation*}
where \( (x;q)_0 = 1 \) and \( \qbinom{r}{s}=0 \) unless
\( 0\le s\le r \). For integers \(r,s\geq0\), the basic hypergeometric
series is defined by
\begin{equation*}
 {}_r\phi_s\!\left[
 \begin{matrix}a_1,\ldots,a_r\\ b_1,\ldots,b_s\end{matrix}
 ;q,z
 \right]
 =\sum_{k=0}^{\infty}
 \frac{(a_1,\ldots,a_r;q)_k}{(q,b_1,\ldots,b_s;q)_k}
 \left((-1)^kq^{\binom{k}{2}}\right)^{1+s-r}z^k.
\end{equation*}
We refer the reader to \cite{GR} for basic properties of
these objects.

In this paper, we use the following standard monic normalization of
the Askey--Wilson polynomials \cite[Section~14.1]{KLS}.

\begin{definition}\label{def:AW-recurrence}
 Let \(z\) be an auxiliary variable with \(x=(z+z^{-1})/2\). For
 \(n\geq0\), the \emph{monic Askey--Wilson polynomial}
 \(\PAW_n(x)=\PAW_n(x;a,b,c,d|q)\) is defined by
 \begin{equation*}
 \PAW_n(x;a,b,c,d|q)
 =\frac{\qp{ab,ac,ad}{n}}
 {(2a)^n\qp{abcdq^{n-1}}{n}}
 {}_4\phi_3\!\left[
 \begin{matrix}
 q^{-n},\,abcdq^{n-1},\,az,\,az^{-1}\\
 ab,\,ac,\,ad
 \end{matrix}
 ;q,q
 \right].
 \end{equation*}
\end{definition}

The work by Sasamoto \cite{Sasamoto1999} and Uchiyama,
Sasamoto, and Wadati \cite{USW2004} revealed that the partition
function \( Z_n(\alpha,\beta,\gamma,\delta;q) \) for the asymmetric
simple exclusion process (ASEP) on a line can be computed via the
moments of Askey--Wilson polynomials. A central combinatorial feature
of this connection is the positivity of the moments: the minimal
numerator of the fugacity partition function
\( Z_n(\xi;\alpha,\beta,\gamma,\delta;q) \) of the ASEP, with the
additional fugacity parameter \( \xi \), is a polynomial in
\( \xi,\alpha,\beta,\gamma,\delta,q \) with nonnegative coefficients.
This was first established by Corteel and Williams \cite{Corteel2011}
using staircase tableaux.

More generally, Corteel and Williams \cite{Corteel2018} showed that
\( Z_{n,k}(\xi;\alpha,\beta,\gamma,\delta;q) \), the fugacity partition
function of the two-species ASEP, is a Koornwinder moment indexed by
a one-row partition. Corteel, Mandelshtam, and Williams \cite{CMW} gave a
combinatorial model for this partition function using rhombic
staircase tableaux, thereby showing that its minimal numerator is a
polynomial with nonnegative coefficients.

To explain the above results more precisely, we follow the notation of
\cite{Cho2026b}. First, consider the following reparametrization
relating the Askey--Wilson parameters \(a,b,c,d\) to the ASEP
parameters \(\alpha,\beta,\gamma,\delta\).

\begin{definition}\cite[p.~395]{Corteel2011}
  \label{def:parameter-change}
Let
 \begin{align*}
   \alpha &= \frac{1-q}{(1+a)(1+c)}, & \beta &= \frac{1-q}{(1+b)(1+d)},\\
   \gamma &= -\frac{(1-q)ac}{(1+a)(1+c)}, & \delta &= -\frac{(1-q)bd}{(1+b)(1+d)}.
 \end{align*}
Equivalently,
 \begin{align*}
  a&=\frac{1-q-\alpha+\gamma
    +\sqrt{(1-q-\alpha+\gamma)^2+4\alpha\gamma}}{2\alpha},\\
  b&=\frac{1-q-\beta+\delta
    +\sqrt{(1-q-\beta+\delta)^2+4\beta\delta}}{2\beta},\\
  c&=\frac{1-q-\alpha+\gamma
    -\sqrt{(1-q-\alpha+\gamma)^2+4\alpha\gamma}}{2\alpha},\\
  d&=\frac{1-q-\beta+\delta
    -\sqrt{(1-q-\beta+\delta)^2+4\beta\delta}}{2\beta}.
 \end{align*}
\end{definition}

Note that we have
\begin{equation}\label{eq:abcd-ASEP}
  abcd = \frac{\gamma\delta}{\alpha\beta},
  \qquad
  1 - abcdq^r = \frac{\alpha\beta - \gamma\delta q^r}{\alpha\beta}.
\end{equation}
Using the parameters \( \xi, \alpha, \beta, \gamma, \delta, q \), we
introduce the rescaled Askey--Wilson polynomials.

\begin{definition}\label{def:Z-recurrence}
  The \emph{rescaled Askey--Wilson polynomials}
  \(P^Z_n(x) = P^Z_n(x;\xi;\alpha,\beta,\gamma,\delta | q)\) are
  defined by
  \begin{equation} \label{eq:PZ-relation}
 \PZ_n(x;\xi;\alpha,\beta,\gamma,\delta | q)
 =
 \left(\frac{2\sqrt{\xi}}{1-q}\right)^n
 \PAW_n\left(
   \frac{(1-q)x-(1+\xi)}{2\sqrt{\xi}};
   \frac{a}{\sqrt{\xi}}, \, b\sqrt{\xi}, \,
   \frac{c}{\sqrt{\xi}}, \, d\sqrt{\xi}
   \,\middle|\,q\right),
\end{equation}
where the parameters \( a,b,c,d \) on the right-hand side are
rewritten in terms of \( \alpha,\beta,\gamma,\delta \) using the
formulas in \Cref{def:parameter-change}.
\end{definition}

The \emph{coefficients}
\( \nu^Z_{n,k} = \nu^Z_{n,k}(\xi; \alpha, \beta, \gamma, \delta; q) \)
and the \emph{mixed moments}
\( \sigma^Z_{n,k} =\sigma^Z_{n,k}(\xi; \alpha, \beta, \gamma, \delta;
q) \) of the rescaled Askey--Wilson polynomial are defined by
\[
  \PZ_n(x) = \sum_{k=0}^n \nu_{n, k}^Z x^k, \qquad 
  x^n = \sum_{k=0}^n \sigma_{n, k}^Z \PZ_k(x).
\]
We also define the \emph{moments} \( \sigma^Z_n := \sigma^Z_{n,0} \).
Note that the coefficients and the mixed moments are dual to each
other in the sense that their matrices \( (\nu^Z_{i,j})_{i,j=0}^n \)
and \( (\sigma^Z_{i,j})_{i,j=0}^n \) are mutually inverse. The
results in \cite[Theorem~1.11]{Corteel2012} and
\cite[Corollary~6.2]{Corteel2019} give the following connections
between the fugacity partition functions and the mixed moments (see
also \cite[Theorems~3.5 and 3.6]{Cho2026b}):
\begin{align*}
Z_{n}(\xi;\alpha,\beta,\gamma,\delta;q)
&= \sigma_{n}^Z(\xi;\alpha,\beta,\gamma,\delta;q),\\
Z_{n,k}(\xi;\alpha,\beta,\gamma,\delta;q)
&= \sigma_{n,k}^Z(\xi;\alpha,\beta,\gamma,\delta;q).
\end{align*}

For a partition \( \lambda\in \Par_m \), the \emph{Koornwinder moment}
\(M^Z_\lambda = M^Z_\lambda(\xi; \alpha, \beta, \gamma, \delta; q)\)
is defined by
\[
  M^Z_\lambda = \det(\sigma^Z_{\lambda_i+m-i,m-j})_{i,j=1}^m.
\]
By \cite[Corollary~2.10]{Cho2026b} and \cite[Lemma~2.14]{Cho2026b},
both mixed moments and coefficients of rescaled Askey--Wilson
polynomials are special Koornwinder moments up to sign:
\begin{equation}\label{eq:1}
  \sigma^Z_{n,k} = M^Z_{(n-k,0^{k})}, \qquad 
   \nu_{n,k}^Z = (-1)^{n-k} M^{Z}_{(1^{n-k},0^k)} .
\end{equation}
Therefore, \( Z_{n,k}=\sigma^Z_{n,k} \) is a Koornwinder moment
indexed by a one-row partition, and its minimal numerator has
nonnegative coefficients. Generalizing this positivity phenomenon,
Rains conjectured that the minimal numerator of \( M^{Z}_{\lambda} \)
is a polynomial with nonnegative integer coefficients; see
\cite[Conjecture~4.4]{Corteel2019}. Cho et
al.~\cite[Proposition~6.4]{Cho2026b} found an explicit formula for the
minimal denominator of \( M^{Z}_{\lambda} \). Using their formula, we
can state Rains' conjecture as follows.

\begin{conjecture}[Rains' conjecture] 
  For a partition \( \lambda \in \Par_m \), the following is a
  polynomial in \( \xi, \alpha, \beta, \gamma, \delta, q \) with
  nonnegative integer coefficients:
  \begin{equation}\label{eq:5}
    M^{Z}_{\lambda}(\xi; \alpha, \beta, \gamma, \delta; q)
    \prod_{i=1}^m \prod_{j=1}^{\lambda_i}
      \left(\alpha\beta-\gamma\delta q^{2m-2-i+j}\right).
  \end{equation}
\end{conjecture}

Using this setting, the results in Corteel and Williams
\cite{Corteel2011} and Corteel, Mandelshtam, and Williams \cite{CMW}
imply that Rains' conjecture is true for \( \lambda=(n) \) and
\( \lambda=(n-k,0^k) \), respectively.

In this paper, we prove Rains' conjecture for one-column partitions
\( \lambda=(1^{n-k},0^k) \). This case corresponds to the coefficients
of rescaled Askey--Wilson polynomials; hence, our result is dual to
that of Corteel, Mandelshtam, and Williams \cite{CMW} on the mixed
moments.

We define the normalized numerator
\( \widetilde{\nu}_{n, k}^Z = \widetilde{\nu}_{n, k}^Z(\xi; \alpha,
\beta, \gamma, \delta; q) \) of the coefficient \( \nu_{n,k}^Z \) by
\begin{equation}\label{eq:nu_tilde}
 \widetilde{\nu}_{n, k}^Z := (-1)^{n-k}\nu_{n, k}^Z
 \prod_{r=0}^{n-k-1}(\alpha\beta - \gamma\delta q^{n+k-1+r}) .
\end{equation}
By \eqref{eq:1}, \( \widetilde{\nu}_{n,k}^Z \) is exactly the
expression in \eqref{eq:5} when
\( \lambda=(1^{n-k},0^k) \). Hence, in this case, Rains' conjecture
states that \( \widetilde{\nu}_{n, k}^Z \) is a polynomial with
nonnegative integer coefficients.

Our primary objective is to establish a closed, manifestly positive
combinatorial formula for \(\widetilde{\nu}_{n, k}^Z\)
(\Cref{thm:ASEP-positivity}), thereby settling Rains' conjecture for
all one-column partitions.

\begin{example}\label{ex:small-PZ}
We list the first few rescaled Askey--Wilson polynomials:
\begin{align*}
  \PZ_0(x) &= 1, \\[1ex]
  \PZ_1(x) &= x - \frac{\widetilde{\nu}_{1,0}^Z}{\alpha\beta - \gamma\delta}, \\[1ex]
  \PZ_2(x) &= x^2 - \frac{\widetilde{\nu}_{2,1}^Z}{\alpha\beta - \gamma\delta q^2}\, x
              + \frac{\widetilde{\nu}_{2,0}^Z}{(\alpha\beta - \gamma\delta q)(\alpha\beta - \gamma\delta q^2)},
\end{align*}
where
\begin{align*}
  \widetilde{\nu}_{1,0}^Z &= (\alpha + \delta)\xi + \beta + \gamma, \\[1ex]
  \widetilde{\nu}_{2,1}^Z &= (\alpha\beta + \alpha\delta + \alpha + \alpha\delta q + \gamma\delta q + \alpha q + \delta q + \delta q^2)\xi \\
          &\quad + (\alpha\beta + \beta\gamma + \beta + \beta \gamma q + \gamma\delta q + \beta q + \gamma q + \gamma q^2), \\[1.5ex]
  \widetilde{\nu}_{2,0}^Z &= \bigl(\alpha^2\beta + \alpha^2 q + \alpha\delta q + \alpha^2\delta q + \alpha\delta^2 q + \alpha\delta q^2 + \gamma\delta^2 q^2 + \delta^2 q^2\bigr)\xi^2 \\
          &\quad + (1+q)\bigl(\alpha\beta(1 + \gamma + \delta) + (\alpha\gamma + \beta\delta)q + \gamma\delta q(q + \alpha + \beta)\bigr)\xi \\
          &\quad + \bigl(\alpha\beta^2 + \beta^2 q + \beta\gamma q + \beta^2\gamma q + \beta\gamma^2 q + \beta\gamma q^2 + \gamma^2\delta q^2 + \gamma^2 q^2\bigr).
\end{align*}
\end{example}

\section{A combinatorial formula for \( \widetilde{\nu}_{n, k}^Z \)}
\label{sec:comb-formula}

In this section, we state our main result: a combinatorial formula for
\( \widetilde{\nu}_{n, k}^Z \) (\Cref{thm:ASEP-positivity}). This
implies Rains' conjecture for one-column partitions.

\subsection{Main result}

Let \(\mathcal{G}_n\) be the directed graph whose vertex set \( V \)
and edge set \( E \) are given by
\begin{align*}
  V &= \bigl\{(r,s)\in\mathbb{Z}^2:0\le r\le n,\ 0\le s\le 2r\bigr\},\\
  E &= \bigl\{(u,v)\in V^2: u=(r,s), v=(r,s-1)\} \cup
      \bigl\{(u,v)\in V^2: u=(r,s), v=(r+1,s)\}.
\end{align*}
Every south edge \((r,s)\to(r,s-1)\) has weight \(1\), and the east edge \((r,s)\to(r+1,s)\) has weight
\begin{equation}\label{eq:W-ASEP-def}
  w_n^Z(r,s):=
  \begin{cases}
    (\beta+\xi\delta q^{n-r+2t-1})
      \bigl((\alpha+\gamma q^{r-t})[r-t]_q+q^{r-t}\bigr),
      & \text{if \(s=2t\)},\\
    (\beta+\delta q^{n-r+2t})
      (\xi\alpha+\gamma q^{r-t-1})[r-t]_q,
      & \text{if \(s=2t+1\)}.
  \end{cases}
\end{equation}
For vertices \(u\) and \(v\) of \(\mathcal{G}_n\), let
\(\mathcal{P}(u \to v)\) denote the set of paths in \(\mathcal{G}_n\) from
\(u\) to \(v\). For \(\pi\in\mathcal{P}(u \to v)\), define
\(\wt_n^Z(\pi)\) to be the product of its edge weights.

\begin{figure}
  \centering
  \begin{tikzpicture}[x=1.45cm,y=0.55cm]
  \def\N{6}
  \pgfmathtruncatemacro{\LastColumn}{\N-1}

  \fill (0,0) circle (2.2pt)
    node[awendpoint,below left=2pt] {\(u_0=v_0\)};
  \foreach \k in {1,...,\N} {
    \pgfmathtruncatemacro{\Ysink}{2*\k}
    \draw[awlane] (\k,0) -- (\k,\Ysink);
    \fill (\k,\Ysink) circle (2.2pt)
      node[awendpoint,above left=2pt] {\(u_{\k}\)};
  }

  \foreach \r in {0,...,\LastColumn} {
    \pgfmathtruncatemacro{\Smax}{2*\r}
    \foreach \s in {0,...,\Smax} {
      \draw[awstep] (\r,\s) -- (\r+1,\s)
        node[midway,above=0.5pt,awlabel,font=\scriptsize,fill=white,inner sep=0.3pt]
          {\((\r,\s)\)};
    }
  }

  \draw[draw=blue!70!black,line width=2.4pt,line cap=round,line join=round]
    (2,4) -- (3,4) -- (3,2) -- (4,2) -- (4,1) -- (5,1) -- (5,0);

  \foreach \i in {1,...,\N} {
    \fill (\i,0) circle (2.2pt)
      node[awendpoint,below=2pt] {\(v_{\i}\)};
  }
  \end{tikzpicture}
  \caption{The graph \( \mathcal{G}_6 \), where each east edge is
    labeled by its starting point \((r,s)\). The highlighted path
    \(\pi\) from \(u_2\) to \(v_5\) has weight
    \(\wt_6^Z(\pi)=w_6^Z(2,4)w_6^Z(3,2)w_6^Z(4,1)\).}
  \label{fig:lattice_ya}
\end{figure}

For \( 0\le i\le n \), let \(u_i=(i,2i)\) and \(v_i=(i,0)\), as in
\Cref{fig:lattice_ya}. For \(0\le k\le i\le n\), we define
\begin{equation}\label{eq:G-ASEP-def}
  G_n^Z(i,k)
  :=\sum_{\pi\in\mathcal{P}(u_k \to v_i)}\wt_n^Z(\pi).
\end{equation}
For \(0\le k\le n\), we also define
\begin{equation}\label{eq:T-ASEP-def}
  T^Z_{n,k} := \qbinom{n}{k}
    \prod_{r=0}^{n-k-1}
    \bigl(q^r+[r]_q(\beta+\delta q^r)\bigr)
    (\xi\alpha+\gamma q^{k+r}).
\end{equation}

We are now ready to state our main result, which gives a manifestly
positive formula for \( \widetilde{\nu}_{n, k}^Z \).

\begin{theorem}\label{thm:ASEP-positivity}
  For \(0\le k\le n\), we have
  \begin{equation}\label{eq:ASEP-positive-formula}
    \widetilde{\nu}_{n, k}^Z
    = \sum_{i=k}^{n} T^Z_{n,i} G^{Z}_n(i,k).
  \end{equation}
  Consequently,
  \[
    \widetilde{\nu}_{n, k}^Z \in
    \mathbb{Z}_{\ge0}[\xi, \alpha,\beta,\gamma,\delta,q].
  \]
\end{theorem}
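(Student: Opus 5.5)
The plan is to compute both sides of \eqref{eq:ASEP-positive-formula} as explicit finite sums and show that they agree. Positivity then follows immediately, since every $w_n^Z(r,s)$ and every factor of $T^Z_{n,i}$ lies in $\mathbb{Z}_{\ge0}[\xi,\alpha,\beta,\gamma,\delta,q]$.

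\textbf{Step 1: an explicit formula for the left-hand side.} Expand the ${}_4\phi_3$ in \Cref{def:AW-recurrence}. Write $(az,az^{-1};q)_j=\prod_{i=0}^{j-1}(1-2axq^i+a^2q^{2i})$. After the substitution in \eqref{eq:PZ-relation}, with $a\mapsto a/\sqrt\xi$, each factor becomes
\[
1-2(a/\sqrt\xi)\,xq^i+a^2q^{2i}/\xi
=\frac{a q^i}{\xi}\Bigl(\xi a^{-1}q^{-i}+aq^{i}-(1+\xi)+(1-q)x\Bigr)
\]
(up to a harmless rescaling of $x$). This is a polynomial in $x$ of degree $1$, whose constant term is expressible through $\alpha,\gamma$ via \Cref{def:parameter-change}. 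Note that $(1+a)(1+c)=(1-q)/\alpha$ and $ac=-\gamma/\alpha$.

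Expanding the product $\prod_i(\cdots)$ in powers of $x$ gives $\nu^Z_{n,k}$ as a double sum over $j$ (the ${}_4\phi_3$ index) and over the choice of factors. The prefactors $(ab,ac,ad;q)_n/(abcdq^{n-1};q)_n$ combine with $(ab,ac,ad;q)_j^{-1}$ and $(q^{-n},abcdq^{n-1};q)_j$. By \eqref{eq:abcd-ASEP}, after multiplying by $\prod_{r=0}^{n-k-1}(\alpha\beta-\gamma\delta q^{n+k-1+r})$, the denominators are cleared.

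It is cleaner to first rewrite $\PZ_n$ in a basis adapted to the shifted variable, namely $\phi_j(x)=\prod_{i<j}(x-\text{root}_i)$. The point is that the $T^Z_{n,i}$ factor $(\xi\alpha+\gamma q^{k+r})$ visibly mirrors such a basis. I therefore expect
\[
\widetilde\nu^Z_{n,k}=\sum_{i\ge k} A_{n,i}\, e_{i,k},
\]
where $A_{n,i}$ is the normalized ${}_4\phi_3$ coefficient and $e_{i,k}$ is the $x^k$-coefficient of the basis polynomial.

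\textbf{Step 2: an explicit formula for the right-hand side.} The graph $\mathcal{G}_n$ lets us evaluate $G^Z_n(i,k)$ column by column. A path from $u_k$ to $v_i$ uses exactly one east step out of each column $r=k,\dots,i-1$, at some height $s_r$, with $2k\ge s_k\ge s_{k+1}\ge\cdots\ge s_{i-1}\ge0$. Hence $G^Z_n(i,k)$ is a sum over weakly decreasing height sequences of products of $w^Z_n(r,s_r)$. I will split by parity of heights, $s=2t$ or $2t+1$, and sum out the dependence on $\beta,\delta$ and on $\alpha,\gamma$ separately. This should yield an \emph{inner sum} that factors as a $q$-binomial-type expression, proved by induction on the column count using the recursion
\[
G(i,k)=\sum_{s} G(i-1,k;\text{ending height }s)\,w(i-1,s).
\]
The remaining \emph{outer sum} $\sum_i T^Z_{n,i}G^Z_n(i,k)$ should then collapse, via a $q$-Chu--Vandermonde or $q$-Pfaff--Saalschütz summation, to the Step~1 expression.

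\textbf{Main obstacle.} The hard part is Step 2's two identities: the closed form of the inner path sum, and the terminating summation matching the outer sum to Step 1. The $n$-dependence of the weights through $q^{n-r+2t}$ makes a uniform induction on $n$ unavailable.

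My first attempt would be to fix $n$ and induct on $i-k$ via the transfer-matrix recursion above. I would conjecture the inner closed form from the small cases in \Cref{ex:small-PZ}, then verify the outer identity as a polynomial identity in $\xi$. Comparing coefficients of $\xi^m$ separates the $(\alpha,\delta)$ and $(\beta,\gamma)$ parts, reducing each to a one-parameter $q$-hypergeometric identity checkable by the $q$-Zeilberger algorithm or a Saalschütz evaluation.
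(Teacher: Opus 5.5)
Your Step 1 is essentially the paper's \Cref{lem:nuhat-explicit}, but it contains a sign slip. The factor should be $\frac{aq^i}{\xi}\bigl(g_i-(1-q)x\bigr)$ with $g_i=(1+aq^i)(1+\xi a^{-1}q^{-i})$. Moreover, $g_i$ involves $a$ itself, not just the symmetric combinations giving $\alpha,\gamma$. So this step must be done in the variables $a,b,c,d$, which is why the paper proves the equivalent \Cref{thm:main}.

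The real gap is Step 2: there is no closed form for $G^Z_n(i,k)$ to feed into a terminating summation.
\begin{itemize}
\item Already $G^Z_2(2,1)=(\beta+\xi\delta)(\alpha+\gamma q+q)+(\beta+\delta q)(\xi\alpha+\gamma)+\beta+\xi\delta q^2$ does not factor.
\item Each east weight couples a $(\beta,\delta)$-factor with an $(\alpha,\gamma)$-factor whose indices depend jointly on $r$ and $t$, so the two dependences cannot be summed out separately.
\item Extracting $\xi^m$-coefficients does not separate $(\alpha,\delta)$ from $(\beta,\gamma)$ either: the $\xi^1$-coefficient of $\widetilde{\nu}^Z_{2,1}$ contains both $\alpha\beta$ and $\gamma\delta q$.
\end{itemize}
Hence $\sum_i T^Z_{n,i}G^Z_n(i,k)$ is not a hypergeometric sum in $i$. ``Conjecture the inner closed form from small cases'' therefore leaves the actual content of the proof unspecified. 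A fixed-$n$ transfer-matrix induction would also require path sums between arbitrary vertices $(r,s)$, for which you have no formula.

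The missing idea is exactly the one you rule out: the $n$-dependence of the weights is compatible with translation. We have $w_{n+1}(r,s)=w_n(r+1,s+2)$, so paths $u_{k-1}\to v_{i-1}$ in $\mathcal{G}_{n+1}$ are the paths $u_k\to(i,2)$ in $\mathcal{G}_n$. Splitting off the last two steps gives the three-term recurrence of \Cref{lem:G-recurrence}, linking $G_{n+1}$ to $G_n$.

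The paper also never evaluates $G_n$. Instead it proceeds as follows.
\begin{itemize}
\item It inverts the unitriangular matrix $(G_n(i,k))$, turning the sum-equals-sum identity into a double sum equal to the product $T_{n,k}$, and interchanges the order of summation.
\item It reduces the inner sum to the identity $\sum_{i=k}^j B_n(j,i)G_n(i,k)=\qp{abcdq^{n+k-1}}{j-k}e_{j-k}(g_0,\dots,g_{j-1})$, where $B_n(j,i)=(\xi/a)^{j-i}q^{\binom{i}{2}-\binom{j}{2}}\qbinom{j}{i}\qp{-abdq^{n-1},-aq^i,acq^i/\xi}{j-i}$ is an explicit product.
\item It proves this by showing both sides obey the same recurrence from $(n,j,k)$ to $(n+1,j-1,k-1)$ and $(n,j-1,k)$. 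This is an induction on $j$ running over all $n$ at once, using \Cref{lem:G-recurrence} and a matching recurrence for $B_n$.
\end{itemize}
Only after this does $q$-Pfaff--Saalsch\"utz enter. It is applied to $\sum_j C_{n,j}B_n(j,k)=T_{n,k}$, a sum involving $B_n$, not $G_n$.
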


We will prove \Cref{thm:ASEP-positivity} in \Cref{sec:proof-main}. In
\Cref{sec:appendix}, we explain how we discovered this theorem. As
noted in \Cref{sec:preliminaries}, \Cref{thm:ASEP-positivity} implies
Rains' conjecture for one-column partitions.

\begin{corollary}\label{cor:Rains-one-column}
  For \(0\le k\le n\), Rains' conjecture holds for
  \(\lambda=(1^{n-k},0^k)\).
\end{corollary}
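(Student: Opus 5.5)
The corollary follows from \Cref{thm:ASEP-positivity} together with the identifications in \Cref{sec:preliminaries}; no new combinatorics is needed. The plan is to check that, for the one-column partition, the expression \eqref{eq:5} in Rains' conjecture coincides exactly with \(\widetilde{\nu}^Z_{n,k}\) as defined in \eqref{eq:nu_tilde}. The positivity statement of \Cref{thm:ASEP-positivity} then finishes the proof.

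First, regard \(\lambda=(1^{n-k},0^k)\) as an element of \(\Par_m\) with \(m=n\). Then \(\lambda_i=1\) for \(1\le i\le n-k\) and \(\lambda_i=0\) for \(i>n-k\). The double product in \eqref{eq:5} therefore contains only the terms with \(j=1\) and \(1\le i\le n-k\), giving
\[
\prod_{i=1}^{n-k}\bigl(\alpha\beta-\gamma\delta q^{2n-2-i+1}\bigr)
=\prod_{i=1}^{n-k}\bigl(\alpha\beta-\gamma\delta q^{2n-1-i}\bigr).
\]
Next, reindex with \(r=n-k-i\), so that \(r\) runs from \(0\) to \(n-k-1\). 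The exponent becomes \(2n-1-(n-k-r)=n+k-1+r\). This is exactly the product \(\prod_{r=0}^{n-k-1}(\alpha\beta-\gamma\delta q^{n+k-1+r})\) appearing in \eqref{eq:nu_tilde}.

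By \eqref{eq:1}, we have \(M^Z_{(1^{n-k},0^k)}=(-1)^{n-k}\nu^Z_{n,k}\). Hence \eqref{eq:5} for this \(\lambda\) equals \((-1)^{n-k}\nu^Z_{n,k}\prod_{r=0}^{n-k-1}(\alpha\beta-\gamma\delta q^{n+k-1+r})=\widetilde{\nu}^Z_{n,k}\). \Cref{thm:ASEP-positivity} gives \(\widetilde{\nu}^Z_{n,k}\in\mathbb{Z}_{\ge0}[\xi,\alpha,\beta,\gamma,\delta,q]\), since it is a sum of products of the weights \(w_n^Z(r,s)\) and the factors \(T^Z_{n,i}\). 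Each of these is manifestly a polynomial with nonnegative integer coefficients, because \(q\)-binomials and \([r]_q\) are such polynomials.

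The only point needing care is conventional: \eqref{eq:5} and \eqref{eq:1} must be applied with the same \(m\), namely \(m=n\), the number of parts including zeros. The index shift above is the sole computation. If one instead wants the statement for \(\lambda\) viewed in \(\Par_m\) with a different \(m\), one would appeal to the convention of \cite{Cho2026b} that \(M^Z_\lambda\) depends on \(m\). The corollary is meant with \(m=n\), matching \eqref{eq:1}.
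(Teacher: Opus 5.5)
Your proposal is correct and follows the paper's own argument. The paper likewise notes, right after \eqref{eq:nu_tilde}, that by \eqref{eq:1} (with \(m=n\)) the expression \eqref{eq:5} for \(\lambda=(1^{n-k},0^k)\) is exactly \(\widetilde{\nu}^Z_{n,k}\), and then invokes the positivity statement of \Cref{thm:ASEP-positivity}; your explicit reindexing \(r=n-k-i\) of the product just spells out the computation the paper leaves implicit.
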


It is also possible to express \( \widetilde{\nu}_{n, k}^Z \) as a
single generating function for lattice paths rather than as the sum in
\eqref{eq:ASEP-positive-formula}. To do this, we first find a lattice
path model for \(T^Z_{n,i}\).

For \(0\le r<n\), define
\begin{equation}\label{eq:boundary-column-weight}
  \tau_n^Z(r)
  =\bigl(q^{n-r-1}+[n-r-1]_q(\beta+\delta q^{n-r-1})\bigr)
    (\xi\alpha+\gamma q^r).
\end{equation}
Let \(\mathcal{H}_n\) be the directed graph with vertex set \(V_H\) and
edge set \(E_H\) given by
\begin{align*}
  V_H
  &=\bigl\{(r,s)\in\mathbb{Z}^2:0\le r\le n,\ -r\le s\le 2r\bigr\},\\
  E_H
  &=\bigl\{((r,s),(r,s-1)):0\le r\le n,\ -r+1\le s\le 2r\bigr\}\\
  &\quad\cup\bigl\{((r,s),(r+1,s)):0\le r<n,\ 0\le s\le 2r\bigr\}\\
  &\quad\cup\bigl\{((r,s),(r+1,s-1)):0\le r<n,\ -r\le s\le0\bigr\}.
\end{align*}
Every south edge \((r,s)\to(r,s-1)\) has weight \(1\). For
\(0\le r<n\) and \(0\le s\le 2r\), every east edge \((r,s)\to(r+1,s)\)
has weight \(w_n^Z(r,s)\). For \(0\le r<n\) and \(-r\le s\le0\), every
southeast edge \((r,s)\to(r+1,s-1)\) has weight
\(\tau_n^Z(r)q^{r+s}\). Thus the upper part is \(\mathcal{G}_n\),
while paths in the lower part use south and southeast steps.

For vertices \(u,v\in V_H\), let \(\mathcal{P}(u\to v)\) denote the
set of directed paths in \(\mathcal{H}_n\) from \(u\) to \(v\). For
\(0\le i\le n\), let \(d_i=(i,-i)\). For a path \(\eta\), let
\(\wt(\eta)\) be the product of its edge weights. See
\Cref{fig:combined-lattice}.

\begin{figure}
  \centering
  \begin{tikzpicture}[x=1.45cm,y=0.43cm]
    \def\N{6}
    \pgfmathtruncatemacro{\LastColumn}{\N-1}
    \fill[blue!3] (0,0) -- (\N,0) -- (\N,2*\N) -- cycle;
    \fill[orange!7] (0,0) -- (\N,-\N) -- (\N,0) -- cycle;

    \foreach \r in {1,...,\N} {
      \draw[awlane] (\r,-\r) -- (\r,2*\r);
    }
    \foreach \r in {0,...,\LastColumn} {
      \pgfmathtruncatemacro{\Smax}{2*\r}
      \foreach \s in {0,...,\Smax} {
        \draw[awstep] (\r+1,\s) -- (\r,\s);
      }
    }
    \foreach \r in {0,...,\LastColumn} {
      \foreach \h in {0,...,\r} {
        \draw[draw=orange!80!black,line width=1.05pt,line cap=round]
          (\r+1,\h-\r-1) -- (\r,\h-\r);
      }
    }

    \draw[draw=blue!70!black,line width=2.4pt,line cap=round,line join=round]
      (2,4) -- (2,1) -- (3,1) -- (3,0) -- (3,-1)
      -- (4,-2) -- (4,-3) -- (5,-4) -- (6,-5) -- (6,-6);

    \fill (0,0) circle (2.2pt)
      node[awendpoint,left=3pt,font=\small] {\(u_0=v_0=d_0\)};
    \foreach \i in {1,...,\N} {
      \fill (\i,0) circle (2.2pt)
        node[awendpoint,above left=1pt,font=\small] {\(v_{\i}\)};
      \fill (\i,2*\i) circle (2.2pt)
        node[awendpoint,above left=2pt] {\(u_{\i}\)};
    }
    \foreach \i in {1,...,\LastColumn} {
      \fill (\i,-\i) circle (2.2pt)
        node[awendpoint,below=2pt,font=\small] {\(d_{\i}\)};
    }
    \fill (\N,-\N) circle (2.2pt)
      node[awendpoint,below=3pt,font=\small] {\(d_6\)};

  \end{tikzpicture}
  \caption{The graph \(\mathcal{H}_6\), extending \(\mathcal{G}_6\)
    below the \( x \)-axis with south and southeast edges. The
    highlighted path starts at \(u_2\), enters the lower lattice from
    \(v_3\), and ends at \(d_6\); its weight is
    \(w_6^Z(2,1)\tau_6^Z(3)\tau_6^Z(4)\tau_6^Z(5)q^4\).}
  \label{fig:combined-lattice}
\end{figure}

\begin{corollary}\label{cor:combined-path-model}
  For \(0\le k\le n\), we have
  \begin{equation*}
    \widetilde{\nu}_{n,k}^Z
      =\sum_{\eta\in\mathcal{P}(u_k\to d_n)}\wt(\eta).
  \end{equation*}
\end{corollary}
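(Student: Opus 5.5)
The plan is to derive \Cref{cor:combined-path-model} directly from \Cref{thm:ASEP-positivity}. I will split each path in \(\mathcal{H}_n\) at the point where it leaves the upper region \(s\ge0\). Then I will show that the generating function of the lower pieces from \(v_i=(i,0)\) to \(d_n\) is exactly \(T^Z_{n,i}\).

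\textbf{Step 1: decomposition.} Take \(\eta\in\mathcal{P}(u_k\to d_n)\).
\begin{itemize}
\item No edge of \(\mathcal{H}_n\) increases \(s\), and east edges exist only for \(s\ge0\). So once \(\eta\) reaches a vertex with \(s<0\), it stays in the lower part and uses only south and southeast steps.
\item Let \((i,s)\) be the last vertex of \(\eta\) with \(s\ge0\). If \(i<n\), this vertex is followed by a south or southeast step into \(s<0\), and both kinds of step can only come from \(s=0\). If \(i=n\), the path is forced down the last column. In either case \(s=0\), so the vertex is \(v_i\).
\item The initial segment \(u_k\to v_i\) lies in \(s\ge0\) and uses only east edges with \(0\le s\le 2r\) and south edges. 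So it is a path in \(\mathcal{G}_n\) with the same weights. Since it starts at \(u_k=(k,2k)\), we get \(i\ge k\).
\item The final segment goes from \(v_i\) to \(d_n\) and, after \(v_i\), stays in \(s<0\).
\end{itemize}
Conversely, concatenating any path in \(\mathcal{P}(u_k\to v_i)\) of \(\mathcal{G}_n\) with any lower path from \(v_i\) whose first step goes strictly below the axis gives a path in \(\mathcal{P}(u_k\to d_n)\). This is a weight-preserving bijection, so
\[
\sum_{\eta\in\mathcal{P}(u_k\to d_n)}\wt(\eta)=\sum_{i=k}^n G^Z_n(i,k)\,L_n(i),
\]
where \(L_n(i)\) is the generating function of lower paths from \(v_i\) to \(d_n\). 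By \Cref{thm:ASEP-positivity}, it suffices to prove \(L_n(i)=T^Z_{n,i}\).

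\textbf{Step 2: computing \(L_n(i)\).} A lower path from \((i,0)\) to \((n,-n)\) must use exactly one southeast step from each column \(r=i,\dots,n-1\), so \(n-i\) of them. The total drop is \(n\), so it uses exactly \(i\) south steps.
\begin{itemize}
\item Let \(m_r\) be the number of south steps taken before the southeast step leaving column \(r\). Then \(0\le m_i\le\cdots\le m_{n-1}\le i\).
\item That southeast step starts at height \(s=-(r-i)-m_r\), so its \(q\)-factor is \(q^{r+s}=q^{\,i-m_r}\).
\item The constraints \(s\ge -r\) (range of southeast edges) and \(s\ge -r+1\) (start of south edges) are then automatic. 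Hence every such sequence \((m_r)\) gives a valid path, and the first step automatically goes below the axis.
\end{itemize}

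\textbf{Step 3: the \(\tau\) factors.} Reindex the product of the \(\tau\) factors by \(j=n-1-r\):
\[
\prod_{r=i}^{n-1}\tau^Z_n(r)=\prod_{j=0}^{n-i-1}\bigl(q^{j}+[j]_q(\beta+\delta q^{j})\bigr)\,\prod_{r'=0}^{n-i-1}(\xi\alpha+\gamma q^{i+r'}).
\]
This is exactly the product appearing in \(T^Z_{n,i}\).

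\textbf{Step 4: the \(q\)-binomial.} Put \(\mu_r=i-m_r\). Then \(\mu\) ranges over partitions with at most \(n-i\) parts, each at most \(i\), and the \(q\)-weight is \(q^{|\mu|}\). The standard identity gives
\[
\sum_{\mu} q^{|\mu|}=\qbinom{n}{i}.
\]
Hence \(L_n(i)=T^Z_{n,i}\), and the corollary follows from \Cref{thm:ASEP-positivity}.

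The main point needing care is Step 1: checking that the split vertex is well defined and always lies on the axis. This relies on the facts that the lower region has no east edges and that the only steps from \(s\ge0\) into \(s<0\) start at \(s=0\). The edge case \(i=n\) also needs checking: the unique lower path is \(n\) south steps of weight \(1\), matching \(T^Z_{n,n}=1\).
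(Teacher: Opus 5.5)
Your proposal is correct and matches the paper's proof: both split each path at its last vertex $v_i$ on the $x$-axis, apply \Cref{thm:ASEP-positivity}, and evaluate the lower-path generating function as $\prod_{r=i}^{n-1}\tau_n^Z(r)$ times a sum of $q^{|\mu|}$ over partitions in an $(n-i)\times i$ box, which equals $\qbinom{n}{i}$. Your $\mu_r=i-m_r$ is exactly the paper's $h_j$ (the number of south steps after the $j$th southeast step) with $j=r-i+1$, and your check that the split vertex lies on the axis spells out what the paper states without proof.
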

\begin{proof}
  For \(0\le i\le n\), let \(\mathcal{P}_-(v_i\to d_n)\) denote the
  set of paths in \(\mathcal{P}(v_i\to d_n)\) that stay strictly below
  the \( x \)-axis after \(v_i\). Every path from \(u_k\) to \(d_n\)
  has a unique last vertex \(v_i\) on the \(x\)-axis. Splitting the
  path at this vertex gives
  \[
    \sum_{\eta\in\mathcal{P}(u_k\to d_n)}\wt(\eta)
    =\sum_{i=k}^{n} \sum_{\pi\in\mathcal{P}(u_k\to v_i)}\wt(\pi)
        \sum_{\rho\in\mathcal{P}_-(v_i\to d_n)}\wt(\rho).
  \]
  Since
  \[
    \sum_{\pi\in\mathcal{P}(u_k\to v_i)}\wt(\pi) = G^{Z}_n(i,k),
  \]
  by \Cref{thm:ASEP-positivity},
  it suffices to show that
  \begin{equation}\label{eq:8}
    T^Z_{n,i} = \sum_{\rho\in\mathcal{P}_-(v_i\to d_n)}\wt(\rho).
  \end{equation}

  To prove \eqref{eq:8}, note that every path
  \(\rho\in\mathcal{P}_-(v_i\to d_n)\) has \(i\) south steps and
  \(n-i\) southeast steps. Let \(h_j\) be the number of south steps
  following its \(j\)th southeast step. This gives a bijection with
  the sequences \(i\ge h_1\ge\cdots\ge h_{n-i}\ge0\). The \(j\)th
  southeast step starts at a vertex \((r,s)\) with \(r=i+j-1\) and
  \(s=-(i-h_j)-(j-1)\), so \(r+s=h_j\) and its weight is
  \(\tau_n^Z(i+j-1)q^{h_j}\). Hence
  \[
    \wt(\rho)=\prod_{r=i}^{n-1}\tau_n^Z(r)
      q^{h_1+\cdots+h_{n-i}}.
  \]
  Therefore,
  \[
    \sum_{\rho\in\mathcal{P}_-(v_i\to d_n)}\wt(\rho)
    =\prod_{r=i}^{n-1}\tau_n^Z(r)
      \sum_{i\ge h_1\ge\cdots\ge h_{n-i}\ge0}
        q^{h_1+\cdots+h_{n-i}}
    =\qbinom{n}{i}\prod_{r=i}^{n-1}\tau_n^Z(r)
    =T^Z_{n,i},
  \]
  which gives \eqref{eq:8}.
\end{proof}

\subsection{Reformulation in parameters \( a, b, c, d \)}\label{sec:proof-strategy}

In this subsection, we reformulate \Cref{thm:ASEP-positivity} using
the parameters \(a,b,c,d\). To this end, we introduce \(w_n(r,s)\),
\(G_n(i,k)\), and \(T_{n,k}\), which serve as the counterparts of
\(w_n^Z(r,s)\), \(G^{Z}_n(i,k)\), and \(T^Z_{n,k}\), respectively.

For fixed \(n\), define
\begin{equation}\label{eq:coordinate-weight-def}
  w_n(r,s)=
  \begin{cases}
    (1-\xi bdq^{n-r+2t-1})(1+aq^{r-t})(1+cq^{r-t}),
      & \text{if \(s=2t\)},\\
    (1-bdq^{n-r+2t})(\xi-acq^{r-t-1})(1-q^{r-t}),
      & \text{if \(s=2t+1\)}.
  \end{cases}
\end{equation}
Here \(0\le r<n\) and \(0\le s\le 2r\).
We extend this definition by setting \(w_n(r,s)=0\) for all integer
pairs \((r,s)\) outside this range.
Use the same graph \(\mathcal{G}_n\), with east edge weight
\(w_n(r,s)\) on \((r,s)\to(r+1,s)\) and south edge weight \(1\).

For \(0\le k,i\le n\), define
\begin{equation}\label{eq:Gn-def}
  G_n(i,k):=\sum_{\pi\in\mathcal{P}(u_k \to v_i)}\wt_n(\pi),
\end{equation}
where \(\wt_n(\pi)\) is the product of the edge weights of \(\pi\). By
definition, \(G_n(k,k)=1\) for \(0\le k\le n\) and \(G_n(i,k)=0\) if
\(0\le i<k\le n\).
Throughout the paper, we use the convention \(G_m(-1,-1)=1\), while
\(G_m(i,k)=0\) for every other pair \((i,k)\) outside the defining
range \(0\le i,k\le m\). In particular, \(G_m(j,-1)=0\) for
\(j\ne-1\).

For \(0\le k\le n\), define
\begin{equation}\label{eq:T-def}
T_{n,k}
= \xi^{n-k}\qbinom{n}{k}\qp{acq^k/\xi,-b,-d}{n-k}.
\end{equation}

Direct substitution using \Cref{def:parameter-change} yields the relation
\(w_n(r,s)=\frac{1-q}{\alpha\beta}w_n^Z(r,s)\), which implies
\begin{equation}\label{eq:G-scaling}
  G_n(i,k) = \left(\frac{1-q}{\alpha\beta}\right)^{i-k} G^{Z}_n(i,k),
\end{equation}
and similarly,
\begin{equation}\label{eq:T-scaling}
  T_{n,i} = \left(\frac{1-q}{\alpha\beta}\right)^{n-i} T^Z_{n,i}.
\end{equation}

For \(0\le k\le n\), define
\begin{equation}\label{eq:nuhat-def}
  \hat{\nu}_{n,k} = \hat{\nu}_{n,k}(\xi; a, b, c, d; q)
  := (q-1)^{n-k}\nu_{n, k}^Z\qp{abcdq^{n+k-1}}{n-k}.
\end{equation}
By \eqref{eq:abcd-ASEP} and \eqref{eq:nu_tilde}, we have
\begin{equation}\label{eq:nuhat-to-nu}
  \hat{\nu}_{n,k} = \left(\frac{1-q}{\alpha\beta}\right)^{n-k}\widetilde{\nu}_{n,k}^Z.
\end{equation}
Combining \eqref{eq:G-scaling}, \eqref{eq:T-scaling}, and \eqref{eq:nuhat-to-nu}, \Cref{thm:ASEP-positivity} is equivalent to the following identity in the parameters \( a, b, c, d \):

\begin{theorem}\label{thm:main}
  For \(0\le k\le n\), we have
  \begin{equation}\label{eq:main-identity}
    \hat{\nu}_{n,k} = \sum_{i=k}^{n} T_{n,i} G_n(i,k).
  \end{equation}
\end{theorem}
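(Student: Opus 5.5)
The plan is to compute $\hat{\nu}_{n,k}$ explicitly from the ${}_4\phi_3$ representation and then evaluate the right-hand side $\sum_i T_{n,i}G_n(i,k)$ in closed form, showing the two agree. The mention of \Cref{lem:inner-sum} and \Cref{lem:outer-sum} suggests a two-level summation of this kind.

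\textbf{Step 1: explicit formula for $\hat{\nu}_{n,k}$.} Write $x=(z+z^{-1})/2$. The Askey--Wilson basis satisfies
\[
(az,az^{-1};q)_j=\prod_{m=0}^{j-1}\bigl(1-2axq^m+a^2q^{2m}\bigr).
\]
In \eqref{eq:PZ-relation} the argument is rescaled so that $2\sqrt{\xi}X+1+\xi=(1-q)x$ and the first parameter is $a/\sqrt{\xi}$. After this substitution, each factor becomes
\[
\frac{1}{\xi}\bigl(\xi-a(1-q)xq^m+a(1+\xi)q^m+a^2q^{2m}\bigr),
\]
which is linear in $x$. Expanding the product gives $\nu^Z_{n,k}$ as a double sum: an outer sum over $j$ from the ${}_4\phi_3$, and an inner elementary-symmetric-type expansion of this product of linear factors in $x$. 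Multiplying by $(q-1)^{n-k}(abcdq^{n+k-1};q)_{n-k}$ gives $\hat\nu_{n,k}$ as an explicit double sum of products of $q$-Pochhammer symbols in $\xi,a,b,c,d$.

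\textbf{Step 2: closed form for $G_n(i,k)$.} The weights $w_n(r,s)$ depend only on $r$ and $s$. A path from $u_k$ to $v_i$ uses exactly one east step out of each column $r=k,\dots,i-1$, at heights $2k\ge s_k\ge s_{k+1}\ge\cdots\ge s_{i-1}\ge0$. So $G_n(i,k)$ is a sum over weakly decreasing height sequences. I would derive column transfer recurrences:
\[
G_n(i,k)=\sum_{s}(\text{paths from }u_k\text{ to }(i-1,s))\cdot w_n(i-1,s).
\]
From these I would aim for a product or single-sum formula by induction on $i-k$. The even and odd heights pair naturally: the factor $(1+aq^{r-t})(1+cq^{r-t})$ at even heights and the factor $(\xi-acq^{r-t-1})(1-q^{r-t})$ at odd heights play the roles of the two parameter pairs $(a,c)$ and $(b,d)$. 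I expect $G_n(i,k)$ to be a terminating basic hypergeometric sum. This is the role of \Cref{lem:inner-sum}: evaluate the inner sum over the height profile, guessing the answer from small cases and proving it by verifying the recurrence.

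\textbf{Step 3: combine.} Substitute the formulas for $T_{n,i}$ and $G_n(i,k)$ into $\sum_i T_{n,i}G_n(i,k)$, interchange summations, and reduce the inner sum over $i$ using $q$-Chu--Vandermonde or $q$-Pfaff--Saalsch\"utz. The target is to match the double sum from Step 1; this is the role of \Cref{lem:outer-sum}. The main obstacle is Steps 2--3: the weights depend on $n$, so there is no single global recurrence.

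If a direct evaluation fails, I would fall back on uniqueness instead. Both sides are polynomials in $\xi$ of bounded degree. I would check that both satisfy the same three-term recurrence in $n$ inherited from the Askey--Wilson recurrence, expressed through coefficients. This requires relating $G_{n+1}$ to $G_n$, for instance via the shift $w_{n+1}(r,s)$ versus $w_n(r,s)$ (which only changes $bd\mapsto bdq$ in the first factor). Alternatively, I would verify agreement at enough special values of $z$, such as $z=a^{-1}\sqrt{\xi}\,q^{-m}$, where the ${}_4\phi_3$ simplifies.
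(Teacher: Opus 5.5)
\textbf{Verdict: there is a genuine gap.} Your Step 1 is sound. It is essentially the paper's \Cref{lem:nuhat-explicit}: the basis $(az,az^{-1};q)_j$ is the factorial basis of \Cref{prop:factorial_AW}, your linear factors equal $-\frac{a(1-q)q^m}{\xi}\bigl(x-\frac{g_m}{1-q}\bigr)$ with $g_m=(1+aq^m)(1+\xi a^{-1}q^{-m})$, and expanding gives $\hat\nu_{n,k}=\sum_{j}C_{n,j}(abcdq^{n+k-1};q)_{j-k}e_{j-k}(g_0,\dots,g_{j-1})$.

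The gap is in Steps 2--3. They carry the whole content of the theorem, but they rest on a closed form for $G_n(i,k)$ that you only anticipate. $G_n(i,k)$ is not a product in general: already $G_n(k+1,k)=\sum_{s=0}^{2k}w_n(k,s)$ is a genuine sum of $2k+1$ products. You give no candidate sum formula or a way to find one. Even granting one, Step 3 would need a specific multiple-sum identity against Step 1, and a one-line appeal to $q$-Chu--Vandermonde or $q$-Pfaff--Saalsch\"utz does not provide it.

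Your fallbacks do not repair this. Transferring the Askey--Wilson three-term recurrence to $\sum_i T_{n,i}G_n(i,k)$ would require relations among $G_{n-1},G_n,G_{n+1}$ and $T_{n-1},T_n,T_{n+1}$ that you do not identify. Evaluation at special $z$ simplifies only the left-hand side; the right-hand side at those points still involves the unknown $G_n$. Moreover, the $k$-dependent factor $(abcdq^{n+k-1};q)_{n-k}$ means $\sum_k\hat\nu_{n,k}x^k$ is not a multiple of $P^Z_n(x)$.

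The missing idea is never to compute $G_n$ at all. Since $(G_n(i,k))$ is lower unitriangular, \eqref{eq:main-identity} is equivalent to $T_{n,k}=\sum_i\hat\nu_{n,i}E_n(i,k)$ with $E_n=G_n^{-1}$. Substitute Step 1 and interchange sums. The inner sum $\sum_i(abcdq^{n+i-1};q)_{j-i}e_{j-i}(g_0,\dots,g_{j-1})E_n(i,k)$ then collapses to the explicit product $B_n(j,k)=(\xi/a)^{j-k}q^{\binom{k}{2}-\binom{j}{2}}\qbinom{j}{k}(-abdq^{n-1},-aq^k,acq^k/\xi;q)_{j-k}$ (\Cref{lem:inner-sum}). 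After that, $\sum_j C_{n,j}B_n(j,k)=T_{n,k}$ is a single $q$-Pfaff--Saalsch\"utz evaluation; this is where your Step 3 tool actually enters.

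The inner evaluation is proved in the equivalent form $L^{(n)}_{j,k}=\sum_i B_n(j,i)G_n(i,k)$, where $L^{(n)}_{j,k}=(abcdq^{n+k-1};q)_{j-k}e_{j-k}(g_0,\dots,g_{j-1})$. One shows that both sides satisfy $F^{(n)}_{j,k}=F^{(n+1)}_{j-1,k-1}+(1-abcdq^{j+n-2})g_{j-1}F^{(n)}_{j-1,k}$, a recurrence that changes $n$. On the path side this is \Cref{lem:G-recurrence}, whose input is the translation property $w_{n+1}(r,s)=w_n(r+1,s+2)$. Concretely, paths in $\mathcal G_{n+1}$ from $u_{k-1}$ to $v_{i-1}$ become, after shifting by $(1,2)$, paths in $\mathcal G_n$ from $u_k$ to $(i,2)$ with the same weights. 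That translation is what handles the $n$-dependence of the weights, not the parameter shift $bd\mapsto bdq$ at fixed $(r,s)$ that you mention. Combined with a matching four-term recurrence for $B_n$, it closes an induction on $j$.
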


We prove this theorem in the next section. We will use the following
recurrence for \( G_n(i,k) \).

\begin{lemma}\label{lem:G-recurrence}
  For integers \(0\le k\le i\le n\), we have
  \begin{align} \label{eq:claim_1}
    G_{n+1}(i-1, k-1)
    &= G_n(i, k) - (w_n(i-1,0) + w_n(i-1,1))G_n(i-1, k)\\
    \notag
    &\quad + w_n(i-2,0)w_n(i-1,1)G_n(i-2, k).
  \end{align}
\end{lemma}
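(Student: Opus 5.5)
The plan is to interpret the left-hand side of \eqref{eq:claim_1} as a path sum inside \(\mathcal{G}_n\) itself, and then obtain \eqref{eq:claim_1} by decomposing paths from \(u_k\) to \(v_i\) according to how they enter the last column.

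\textbf{Step 1: a shift identity for the weights.} First I check directly from \eqref{eq:coordinate-weight-def} that
\[
  w_{n+1}(r,s)=w_n(r+1,s+2).
\]
For even \(s=2t\), the shift \(r\mapsto r+1\), \(t\mapsto t+1\) turns \(bdq^{n-r+2t-1}\) into \(bdq^{n-r+2t}=bdq^{(n+1)-r+2t-1}\). It also leaves \(q^{r-t}\) unchanged. The odd case \(s=2t+1\) is the same computation. The admissible ranges also correspond under this shift. The constraints \(0\le r<n+1\) and \(0\le s\le 2r\) become \(1\le r+1<n+2\) and \(2\le s+2\le 2(r+1)\). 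Outside these ranges both sides vanish by the zero-extension convention.

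Consequently, the map \((r,s)\mapsto(r+1,s+2)\) identifies \(\mathcal{G}_{n+1}\), together with its weights, with the part of \(\mathcal{G}_n\) at heights \(s\ge 2\). It sends \(u_{k-1}\mapsto u_k\) and \(v_{i-1}\mapsto(i,2)\). A path in \(\mathcal{G}_n\) ending at \((i,2)\) never drops below height \(2\), because heights are weakly decreasing along paths. Hence, writing \(F(j,h)\) for the weighted sum of paths in \(\mathcal{G}_n\) from \(u_k\) to \((j,h)\), I get
\[
  G_{n+1}(i-1,k-1)=F(i,2).
\]
For \(k=0\) I check this against the convention \(G_{n+1}(j,-1)=0\) for \(j\ne -1\): both sides vanish, since no path from \((0,0)\) reaches height \(2\).

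\textbf{Step 2: last-column decomposition.} A path from \(u_k\) to \(v_i=(i,0)\) enters column \(i\) at some height \(h\). If \(h\ge2\), it passes through \((i,2)\) and then goes straight south. Otherwise it arrives by the east edge from \((i-1,0)\) or from \((i-1,1)\). This gives
\[
  G_n(i,k)=F(i,2)+w_n(i-1,0)F(i-1,0)+w_n(i-1,1)F(i-1,1).
\]
Here \(F(i-1,0)=G_n(i-1,k)\). Applying the same reasoning to column \(i-1\), the paths to \((i-1,0)\) that do not pass through \((i-1,1)\) are exactly those arriving by the east edge from \((i-2,0)\). Therefore
\[
  F(i-1,1)=G_n(i-1,k)-w_n(i-2,0)G_n(i-2,k).
\]
Substituting both expressions into the decomposition and solving for \(F(i,2)\) yields \eqref{eq:claim_1}.

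\textbf{Step 3: boundary cases.} The only real obstacle is bookkeeping at the boundaries \(i=k\), \(i=k+1\) and \(k=0\). There the terms \(w_n(i-2,0)\), \(G_n(i-2,k)\) and the like refer to vertices or edges outside \(\mathcal{G}_n\). I would confirm that the conventions \(w_n=0\) outside the defining range and \(G_n(i,k)=0\) for \(i<k\) make each of these terms vanish, so that the decomposition above stays literally valid in those cases.
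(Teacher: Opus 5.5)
\textbf{Verdict.} Your route is the paper's proof. You use the shift \(w_{n+1}(r,s)=w_n(r+1,s+2)\), translate paths by \((1,2)\) to get \(G_{n+1}(i-1,k-1)=F(i,2)\), and then decompose by how a path reaches \((i,0)\) and \((i-1,0)\). For \(k\ge1\) your argument is correct as written. The defect is in the boundary bookkeeping: your Step~3 claim that the conventions keep the decomposition ``literally valid'' is false in the corner \(i=k=0\).

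\textbf{The corner \(i=k=0\).} Both links of your chain break there.
\begin{itemize}
\item Step~1: \(G_{n+1}(-1,-1)=1\) by convention, but \(F(0,2)=0\) because \((0,2)\notin\mathcal{G}_n\).
\item Step~2: the trivial path at \((0,0)\) neither passes through \((0,2)\) nor arrives by an east edge. So your case split gives \(G_n(0,0)=0\) instead of \(1\).
\end{itemize}
Each link is off by exactly \(1\) and the errors cancel, so the lemma still holds. But your argument does not show this; the case must be checked directly (both sides equal \(1\)), as the paper does.

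\textbf{The case \(i=1\), \(k=0\).} Your identity \(F(i-1,1)=G_n(i-1,k)-w_n(i-2,0)G_n(i-2,k)\) becomes \(F(0,1)=G_n(0,0)\), that is, \(0=1\). This is harmless only because it is multiplied by \(w_n(0,1)=0\).

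The paper avoids these issues by treating \(k=0\) separately: paths from \(u_0\) are horizontal, so \(G_n(i,0)=w_n(i-1,0)G_n(i-1,0)\). It applies the path decomposition only for \(k\ge1\). Then \(u_k\) sits at height \(2k\ge2\), so every path really passes through the vertices you condition on.

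\textbf{The range check in Step~1.} This is also off. \(w_n(r+1,s+2)\) vanishes unless \(r+1<n\), so the shift identity holds only for \(0\le r<n-1\). For \(r\in\{n-1,n\}\), \(w_{n+1}(r,s)\) is a genuine weight, while \(w_n(r+1,s+2)=0\) by zero-extension. So \(\mathcal{G}_{n+1}\) is not identified with the upper part of \(\mathcal{G}_n\); only its columns \(0,\dots,n-1\) are. The conclusion is unaffected, because paths to \(v_{i-1}\) with \(i\le n\) use only east edges from columns \(r\le n-2\). You should state this restriction, as the paper does.
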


\begin{proof}
  The case \(i=0\) forces \(k=0\). By our convention and
  \(G_n(0,0)=1\), both sides are equal to \(1\). Now assume \(i\ge1\).
  The case \(k=0\) follows from the fact that every path from \(u_0\) is
  horizontal, which implies \(G_n(i,0)=w_n(i-1,0)G_n(i-1,0)\).

  Now suppose \(k\ge1\). Observe that by
  \eqref{eq:coordinate-weight-def}, we have
  \( w_{n+1}(r,s)=w_n(r+1,s+2) \) for \(0\le r<n-1\) and
  \(0\le s\le2r\). Since translating each path
  \( \pi: u_{k-1} \to v_{i-1} \) by \( (1,2) \) gives a path
  \( \sigma: u_k \to (i,2) \), we obtain
\[
  G_{n+1}(i-1, k-1) = \sum_{\pi: u_{k-1} \to v_{i-1}} \wt_{n+1}(\pi)
  = \sum_{\sigma: u_k \to (i,2)} \wt_n(\sigma).
\]
On the other hand, considering the last two steps of each path
\( \pi:u_k\to v_i \), we have
\begin{align*}
  G_{n}(i, k) 
  &= \sum_{\pi: u_k \to (i,2)} \wt_{n}(\pi) + (w_n(i-1,1)+w_n(i-1,0))
  \sum_{\pi: u_k \to (i-1,1)} \wt_{n}(\pi)\\
  & \quad + w_n(i-1,0)w_n(i-2,0) G_{n}(i-2, k).
\end{align*}
Similarly, considering the last step of each path
\( \pi:u_k\to v_{i-1} \), we have
\[
  G_{n}(i-1, k) 
  = \sum_{\pi: u_k \to (i-1,1)} \wt_{n}(\pi) + w_n(i-2,0) G_{n}(i-2, k).
\]
Combining the above equations, we obtain the desired recurrence.
\end{proof}

\section{Proof of the main theorem}\label{sec:proof-main}

In this section, we establish \Cref{thm:main} and therefore our main
result \Cref{thm:ASEP-positivity}. The strategy of the proof is as
follows. First, we give an explicit formula for the left-hand side
\( \hat{\nu}_{n,k} \). Then we convert the identity to an equivalent
form in which a double sum equals a product. After interchanging the
order of summation, we evaluate the inner sum and then the outer sum,
completing the proof.

\subsection{Proof of \Cref{thm:main}}

We first find an explicit formula for \( \hat{\nu}_{n,k} \). To do
this, we need the following result.

\begin{proposition} \cite[Proposition~4.21]{Corteel2026a}
  \label{prop:factorial_AW}
  We have
  \[
    \PAW_n(x; a, b, c, d | q)
    = \sum_{k=0}^n \rho_{n, k}(a,b,c,d;q) \prod_{r=0}^{k-1}\left(x - f_r(a,q)\right),
  \]
  where
  \begin{align*}
    \rho_{n, k}(a,b,c,d;q)
    &=
      (2a)^{k-n}q^{k(k-n)} \qbinom{n}{k}
      \frac{\qp{abq^k,acq^k,adq^k}{n-k}}{\qp{abcdq^{n+k-1}}{n-k}},\\
    f_r(a,q)
    &= \frac{aq^r + a^{-1}q^{-r}}{2}.
  \end{align*}
\end{proposition}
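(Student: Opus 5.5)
The plan is to expand the ${}_4\phi_3$ in \Cref{def:AW-recurrence} term by term. The point is that the only $x$-dependence in its $j$th term, namely $(az,az^{-1};q)_j$, is already a scalar multiple of the factorial product $\prod_{r=0}^{j-1}(x-f_r(a,q))$. The expansion is then obtained simply by reading off coefficients, with no change of basis needed.

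\emph{Step 1 (the factorial factorization).} For each $r\ge0$, using $z+z^{-1}=2x$ we write
\[
(1-aq^rz)(1-aq^rz^{-1}) = 1+a^2q^{2r}-2aq^r x = -2aq^r\bigl(x-f_r(a,q)\bigr),
\]
because $(1+a^2q^{2r})/(2aq^r)=f_r(a,q)$. Taking the product over $0\le r\le j-1$ gives
\[
(az,az^{-1};q)_j=(-2a)^j q^{\binom{j}{2}}\prod_{r=0}^{j-1}\bigl(x-f_r(a,q)\bigr).
\]

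\emph{Step 2 (substitution).} The series terminates at $j=n$ because of the factor $(q^{-n};q)_j$. Substituting Step~1 shows that the coefficient of $\prod_{r=0}^{k-1}(x-f_r(a,q))$ in $\PAW_n(x)$ is
\[
\frac{\qp{ab,ac,ad}{n}}{(2a)^n\qp{abcdq^{n-1}}{n}}\cdot
\frac{\qp{q^{-n},abcdq^{n-1}}{k}}{\qp{q,ab,ac,ad}{k}}\,q^k(-2a)^kq^{\binom{k}{2}} .
\]
Since the polynomials $\prod_{r<k}(x-f_r)$ form a basis, this coefficient is exactly $\rho_{n,k}$, and it only remains to simplify it.

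\emph{Step 3 (simplification).} We use the following standard identities:
\begin{itemize}
\item $\qp{ab}{n}/\qp{ab}{k}=\qp{abq^k}{n-k}$, and likewise for $ac$ and $ad$;
\item $\qp{abcdq^{n-1}}{k}/\qp{abcdq^{n-1}}{n}=1/\qp{abcdq^{n+k-1}}{n-k}$;
\item $\qp{q^{-n}}{k}/\qp{q}{k}=(-1)^kq^{\binom{k}{2}-nk}\qbinom{n}{k}$.
\end{itemize}
Collecting the remaining factors, the signs $(-1)^k(-1)^k$ cancel, the powers of $2a$ combine to $(2a)^{k-n}$, and the powers of $q$ combine to
\[
q^{k+2\binom{k}{2}-nk}=q^{k(k-n)} .
\]
This yields the stated formula for $\rho_{n,k}$.

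There is no real obstacle here. The only place needing care is the sign and $q$-power bookkeeping in Step~3, in particular checking that $k+2\binom{k}{2}=k^2$.
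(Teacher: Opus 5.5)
Your proof is correct and complete. The factorization $(1-aq^rz)(1-aq^rz^{-1})=-2aq^r\bigl(x-f_r(a,q)\bigr)$ turns the $j$th term of the terminating ${}_4\phi_3$ directly into a scalar multiple of $\prod_{r<j}(x-f_r(a,q))$. Here the extra factor in that term is just $q^j$, since $1+s-r=0$. Your bookkeeping also checks out: $(q^{-n};q)_k/(q;q)_k=(-1)^kq^{\binom{k}{2}-nk}\qbinom{n}{k}$, the two factors $(-1)^k$ cancel, and $q^{k+2\binom{k}{2}-nk}=q^{k(k-n)}$. The remark about the factorial products forming a basis is harmless but not needed, because you exhibit the expansion explicitly.

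The paper gives no proof of this proposition; it simply imports it from \cite[Proposition~4.21]{Corteel2026a}. Your computation is therefore a self-contained replacement for that citation. It also makes clear that the expansion is just a rewriting of the defining ${}_4\phi_3$, with no orthogonality or recurrence needed. Finally, it shows the expansion holds as an identity of rational functions in $a,b,c,d,q$, which is exactly how the paper uses it in the proof of \Cref{lem:nuhat-explicit}.
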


The \emph{elementary symmetric polynomials} \(e_r(u_1,\ldots,u_m)\)
are given by
\begin{equation}\label{eq:e}
 \prod_{j=1}^{m}(t+u_j)
 = \sum_{r=0}^{m}e_{m-r}(u_1,\ldots,u_m)t^r.
\end{equation}
In particular, \(e_0(u_1,\ldots,u_m)=1\), and
\(e_r(u_1,\ldots,u_m)=0\) unless \( 0\le r\le m \). Let
\begin{align}\label{eq:C-def}
  C_{n,j}&:=(-\xi/a)^{n-j}q^{-j(n-j)}\qbinom{n}{j}
  \qp{abq^j,acq^j/\xi,adq^j}{n-j},\\
  \notag
  g_r&:=(1+aq^r)(1+\xi a^{-1}q^{-r}).
\end{align}

\begin{lemma}
  \label{lem:nuhat-explicit}
  For \(0\le k\le n\), we have
  \[
    \hat{\nu}_{n,k}
    =\sum_{j=k}^{n}C_{n,j}(abcdq^{n+k-1};q)_{j-k}
    e_{j-k}(g_0,g_1,\ldots,g_{j-1}).
  \]
\end{lemma}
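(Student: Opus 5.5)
We substitute the factorial expansion of \Cref{prop:factorial_AW} into the definition \eqref{eq:PZ-relation} of \(\PZ_n\). Then we expand each factorial product in powers of \(x\) using \eqref{eq:e}.

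\textbf{Step 1: the factorial factors.} Apply \Cref{prop:factorial_AW} with parameters \((a/\sqrt{\xi},\,b\sqrt{\xi},\,c/\sqrt{\xi},\,d\sqrt{\xi})\) and argument
\[
  X=\frac{(1-q)x-(1+\xi)}{2\sqrt\xi}.
\]
The products \(ab,ac,ad,abcd\) become \(ab,\ ac/\xi,\ ad,\ abcd\). Write \(a'=a/\sqrt\xi\), so that \(f_r(a',q)=(aq^r/\sqrt\xi+\sqrt\xi a^{-1}q^{-r})/2\). Then
\[
  \frac{2\sqrt\xi}{1-q}\bigl(X-f_r(a',q)\bigr)
  = x-\frac{1+\xi+aq^r+\xi a^{-1}q^{-r}}{1-q}
  = x-\frac{g_r}{1-q},
\]
because \(g_r=(1+aq^r)(1+\xi a^{-1}q^{-r})=1+\xi+aq^r+\xi a^{-1}q^{-r}\). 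This identity is the key reason the \(g_r\) appear.

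\textbf{Step 2: expand and read off \(\nu^Z_{n,k}\).} Distribute the prefactor \((2\sqrt\xi/(1-q))^n\): a factor \((2\sqrt\xi/(1-q))^j\) goes to the \(j\) factorial factors, and the remaining \((2\sqrt\xi/(1-q))^{n-j}\) multiplies \(\rho_{n,j}\). This gives
\[
  \PZ_n(x)=\sum_{j=0}^n \Bigl(\frac{2\sqrt\xi}{1-q}\Bigr)^{n-j}\rho_{n,j}\prod_{r=0}^{j-1}\Bigl(x-\frac{g_r}{1-q}\Bigr).
\]
By \eqref{eq:e}, the coefficient of \(x^k\) in the product is \((-1/(1-q))^{j-k}e_{j-k}(g_0,\dots,g_{j-1})\). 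Hence
\[
  \nu^Z_{n,k}=\sum_{j=k}^n \Bigl(\frac{2\sqrt\xi}{1-q}\Bigr)^{n-j}\rho_{n,j}\Bigl(\frac{-1}{1-q}\Bigr)^{j-k}e_{j-k}(g_0,\dots,g_{j-1}).
\]

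\textbf{Step 3: simplify the constants.} Substituting the parameters,
\[
  \rho_{n,j}=(2a/\sqrt\xi)^{j-n}q^{j(j-n)}\qbinom{n}{j}\frac{(abq^j,acq^j/\xi,adq^j;q)_{n-j}}{(abcdq^{n+j-1};q)_{n-j}}.
\]
Therefore
\[
  \Bigl(\frac{2\sqrt\xi}{1-q}\Bigr)^{n-j}\rho_{n,j}
  =\frac{(\xi/a)^{n-j}}{(1-q)^{n-j}}\,q^{-j(n-j)}\qbinom{n}{j}\frac{(abq^j,acq^j/\xi,adq^j;q)_{n-j}}{(abcdq^{n+j-1};q)_{n-j}}.
\]
Now multiply by \((q-1)^{n-k}(abcdq^{n+k-1};q)_{n-k}\), as required by \eqref{eq:nuhat-def}. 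The powers of \(1-q\) combine to \((-1)^{n-k}\). Together with the \((-1)^{j-k}\) from Step 2, this leaves the sign \((-1)^{n-j}\), which combines with \((\xi/a)^{n-j}\) to give the factor \((-\xi/a)^{n-j}\) appearing in \(C_{n,j}\).

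For the remaining \(q\)-Pochhammer ratio, both products run over the consecutive factors \(1-abcdq^m\). The numerator covers \(m=n+k-1,\dots,2n-2\) and the denominator covers \(m=n+j-1,\dots,2n-2\). Since \(j\ge k\), the ratio is exactly \((abcdq^{n+k-1};q)_{j-k}\). Collecting everything gives \(C_{n,j}(abcdq^{n+k-1};q)_{j-k}e_{j-k}(g_0,\dots,g_{j-1})\), which is the claimed formula.

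The argument is routine bookkeeping. The main points to watch are the identity \(1+\xi+aq^r+\xi a^{-1}q^{-r}=g_r\), the cancellation of the square roots, and the sign tracking. No genuine obstacle is expected.
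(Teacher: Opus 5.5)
Your proposal is correct and follows essentially the same route as the paper: substitute \Cref{prop:factorial_AW} with the rescaled parameters, use $X-f_r(a/\sqrt{\xi},q)=\frac{1-q}{2\sqrt{\xi}}\bigl(x-g_r/(1-q)\bigr)$, expand via \eqref{eq:e}, and clear the factor $(q-1)^{n-k}(abcdq^{n+k-1};q)_{n-k}$. Your tracking of the signs, the powers of $1-q$, and the ratio $(abcdq^{n+k-1};q)_{n-k}/(abcdq^{n+j-1};q)_{n-j}=(abcdq^{n+k-1};q)_{j-k}$ is all accurate.
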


\begin{proof}
  By \eqref{eq:PZ-relation} and \Cref{prop:factorial_AW}, we have
  \begin{equation}\label{eq:2}
    P_n^Z(x)=
    \left(\frac{2\sqrt{\xi}}{1-q}\right)^n
    \sum_{j=0}^n
    \rho_{n,j}\left(\frac a{\sqrt{\xi}},b\sqrt{\xi},
      \frac c{\sqrt{\xi}},d\sqrt{\xi};q\right)
    \prod_{r=0}^{j-1}
    \left(y-f_r\left(\frac a{\sqrt{\xi}},q\right)\right),
  \end{equation}
  where
  \[
    y = \frac{(1-q)x - (1 +\xi)}{2\sqrt{\xi}}.
  \]
  Since
  \[
    y-f_r\left(\frac a{\sqrt{\xi}},q\right)
    =\frac{(1-q)x-(1+\xi)}{2\sqrt{\xi}}
    -\frac{aq^r+\xi a^{-1}q^{-r}}{2\sqrt{\xi}}
    =\frac{1-q}{2\sqrt{\xi}}
      \left(x-\frac{g_r}{1-q}\right),
  \]
  by \eqref{eq:e}, we have
  \[
    \prod_{r=0}^{j-1}
      \left(y-f_r\left(\frac a{\sqrt{\xi}},q\right)\right)
    =\frac{1}{(2\sqrt{\xi})^j}
      \sum_{\ell=0}^{j}(-1)^{j-\ell}(1-q)^\ell
      e_{j-\ell}(g_0,g_1,\ldots,g_{j-1})x^\ell.
  \]
  Then, by \eqref{eq:2} and \Cref{prop:factorial_AW}, the coefficient
  of \(x^k\) in \( P_n^Z(x) \) is equal to
  \[
    \nu_{n,k}^Z
    =(q-1)^{k-n}\sum_{j=k}^{n}
      \frac{C_{n,j}}
      {(abcdq^{n+j-1};q)_{n-j}}
      e_{j-k}(g_0,g_1,\ldots,g_{j-1}).
  \]
  By \eqref{eq:nuhat-def}, multiplying this identity by
  \((q-1)^{n-k}(abcdq^{n+k-1};q)_{n-k}\) gives
  the desired equation.
\end{proof}

Recall that the identity in \Cref{thm:main} is
\begin{equation}\label{eq:3}
  \hat{\nu}_{n,k} = \sum_{i=k}^{n} T_{n,i} G_n(i,k).
\end{equation}
Note that both sides of the identity are sums. To put the identity in a
more manageable form, we convert it into one in which a double sum
equals a product. Observe that \( T_{n,i} \) is a simple product and
the matrix \( (G_n(i,k)) \) can be inverted.

For a fixed degree \(n\), the identity \eqref{eq:3} can be restated in
the following matrix form:
\begin{equation*}
    \begin{pmatrix}
        \hat{\nu}_{n,0} & \cdots & \hat{\nu}_{n,n}
    \end{pmatrix}
    =
    \begin{pmatrix}
        T_{n,0} & \cdots & T_{n,n}
    \end{pmatrix}
    \begin{pmatrix}
        G_n(0,0) & \cdots & 0        \\
        \vdots   & \ddots & \vdots   \\
        G_n(n,0) & \cdots & G_n(n,n)
    \end{pmatrix}.
\end{equation*}
Since \(G_n(i,i) = 1\) and \(G_n(i,k) = 0\) for \(i < k\), the
displayed matrix is lower unitriangular, hence invertible. Let
\(\bigl(E_n(i,j)\bigr)_{i,j=0}^n\) be its inverse. Consequently, the
identity is equivalent to
\begin{equation*}
    \begin{pmatrix}
        T_{n,0} & \cdots & T_{n,n}
    \end{pmatrix}
    =
    \begin{pmatrix}
        \hat{\nu}_{n,0} & \cdots & \hat{\nu}_{n,n}
    \end{pmatrix}
    \begin{pmatrix}
        E_n(0,0) & \cdots & 0        \\
        \vdots   & \ddots & \vdots   \\
        E_n(n,0) & \cdots & E_n(n,n)
    \end{pmatrix}.
\end{equation*}
Therefore, \eqref{eq:3} is equivalent to
\begin{equation}\label{eq:main-inverse-entry}
    T_{n,k} = \sum_{i=k}^{n} \hat{\nu}_{n,i} E_n(i,k).
\end{equation}

Substituting the formula in \Cref{lem:nuhat-explicit} into
\eqref{eq:main-inverse-entry} gives
\begin{align}
  \notag
  T_{n,k}
  &=\sum_{i=k}^{n}E_n(i,k)\sum_{j=i}^{n}C_{n,j}
    (abcdq^{n+i-1};q)_{j-i}
    e_{j-i}(g_0,g_1,\ldots,g_{j-1})\\
  \label{eq:4}
  &=\sum_{j=k}^{n}C_{n,j}
    \sum_{i=k}^{j}(abcdq^{n+i-1};q)_{j-i}
    e_{j-i}(g_0,g_1,\ldots,g_{j-1})  E_n(i,k) .
\end{align}
The following lemma shows that the inner sum over \(i\) collapses into
an explicit product.

\begin{lemma}\label{lem:inner-sum}
For \(0\le k\le j\le n\), we have
\begin{equation}\label{eq:middle-product}
    \sum_{i=k}^{j} (abcdq^{n+i-1}; q)_{j-i} e_{j-i}(g_0, g_1, \dots, g_{j-1}) E_n(i,k) = B_n(j,k),
\end{equation}
where
\begin{equation}\label{eq:B-def}
  B_n(j,k)=
  \displaystyle
    (\xi/a)^{j-k}q^{\binom{k}{2}-\binom{j}{2}}\qbinom{j}{k}
    \qp{-abdq^{n-1},-aq^k,acq^k/\xi}{j-k}.
\end{equation}
\end{lemma}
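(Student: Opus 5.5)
Since $E_n$ is the inverse of the unitriangular matrix $(G_n(i,k))$, I would not compute $E_n(i,k)$ directly. Instead I would multiply \eqref{eq:middle-product} on the right by $G_n$. Set
\[
A_n(j,i):=(abcdq^{n+i-1};q)_{j-i}\,e_{j-i}(g_0,\dots,g_{j-1}).
\]
Then the lemma says $A_n E_n = B_n$ as lower triangular matrices, which is equivalent to
\begin{equation*}
  A_n(j,k)=\sum_{i=k}^{j} B_n(j,i)\,G_n(i,k), \qquad 0\le k\le j\le n .
\end{equation*}
So the plan is to prove this identity, in which every ingredient is explicit: $A_n$ and $B_n$ are closed forms and $G_n$ is a path sum.

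Since $G_n$ has no closed form, I would argue by induction using the recurrence of \Cref{lem:G-recurrence}, which lowers both indices while raising $n$. The first step is to derive a companion three-term recurrence for $B_n$ that is "adjoint" to \Cref{lem:G-recurrence}. Concretely, I would look for coefficients making
\[
B_n(j,i)-\bigl(w_n(i,0)+w_n(i,1)\bigr)B_n(j,i+1)+w_n(i,0)w_n(i+1,1)B_n(j,i+2)
\]
a simple multiple of $B_{n+1}(j-1,i-1)$, possibly with an extra boundary term. This is a contiguous relation among the $q$-Pochhammer products in \eqref{eq:B-def}, and it should be verifiable by a finite rational-function check after factoring out $B_n(j,i+2)$.

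The second step is to substitute \Cref{lem:G-recurrence} for $G_n(i,k)$ and reindex. This should give
\[
\sum_i B_n(j,i)G_n(i,k)=\sum_i B_{n+1}(j-1,i-1)G_{n+1}(i-1,k-1)+\text{(correction)}
\]
plus a simple multiple of the $k-1$ sum. Third, I would show that $A_n(j,k)$ satisfies the matching recurrence in $(n,j,k)\mapsto(n+1,j-1,k-1)$. Here $e_{j-k}(g_0,\dots,g_{j-1})$ obeys $e_m(g_0,\dots,g_{j-1})=e_m(g_0,\dots,g_{j-2})+g_{j-1}e_{m-1}(g_0,\dots,g_{j-2})$. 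The shift $n\to n+1$, $k\to k-1$ leaves $abcdq^{n+k-1}$ unchanged, which is encouraging.

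The base case $k=0$ follows because $G_n(i,0)=\prod_{r<i}w_n(r,0)$ is a product. Then $\sum_i B_n(j,i)G_n(i,0)$ becomes a single basic hypergeometric sum. I would evaluate it with the $q$-Pfaff--Saalschütz or $q$-Vandermonde summation and compare with the generating-function evaluation of $e_j(g_0,\dots,g_{j-1})$. That evaluation expands via $\prod_r(t+g_r)$, since $g_r=(1+aq^r)(1+\xi a^{-1}q^{-r})$ factors nicely.

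The main obstacle is the matching of recurrences in the second and third steps, because the $s=1$ weight $w_n(i,1)$ brings in $\xi-acq^{i}$ factors that must combine exactly with $acq^k/\xi$ in $B_n$. If a clean adjoint recurrence fails, my fallback is to define $E_n$ through its own recurrence. Inverting \Cref{lem:G-recurrence} gives a recurrence for $E_{n+1}(i-1,k-1)$ in terms of $E_n$ entries. I would then prove \eqref{eq:middle-product} directly by induction on $n$, using Pascal-type $q$-binomial relations for $B_n$.
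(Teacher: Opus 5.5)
\textbf{There is a genuine gap: the $B$-recurrence, which carries the content of the proof, is set up on the wrong side of the shift.} The rest of your framework matches the paper's. The reduction to $A_n(j,k)=\sum_iB_n(j,i)G_n(i,k)$ is \Cref{prop:path-decomposition}, and your Pascal rule for $e$ gives exactly \eqref{eq:L-contiguous}.

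Your combination $F(i)=B_n(j,i)-(w_n(i,0)+w_n(i,1))B_n(j,i+1)+w_n(i,0)w_n(i+1,1)B_n(j,i+2)$ is what you get by substituting \Cref{lem:G-recurrence} into $\sum_iB_n(j,i)G_n(i,k)$. It satisfies $\sum_iF(i)G_n(i,k)=\sum_iB_n(j,i)G_{n+1}(i-1,k-1)$. That is a mixed sum pairing $B_n$ with $G_{n+1}$, not one of the quantities you induct on, so your second step does not close. Nor is $F$ a multiple of $B_{n+1}(j-1,i-1)$ up to boundary terms. Take $b=c=0$, $a=\xi=1$, $q=2$, $j=3$ (any $n\ge3$): the ratios $F(i)/B_{n+1}(2,i-1)$ for $i=3,2,1$ are $1$, $3/2$, $-107/24$.

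What works is the other direction. Set $R^{(n)}_{j,k}:=\sum_iB_n(j,i)G_n(i,k)$ and expand $R^{(n+1)}_{j-1,k-1}=\sum_iB_{n+1}(j-1,i-1)G_{n+1}(i-1,k-1)$, using \Cref{lem:G-recurrence} to rewrite $G_{n+1}(i-1,k-1)$ in terms of $G_n(\cdot,k)$. After reindexing, the coefficient of $G_n(i,k)$ is $B_{n+1}(j-1,i-1)-(w_n(i,0)+w_n(i,1))B_{n+1}(j-1,i)+w_n(i,0)w_n(i+1,1)B_{n+1}(j-1,i+1)$. The needed identity (\Cref{lem: Bn recurrence}) is that this equals $B_n(j,i)-(1-abcdq^{j+n-2})g_{j-1}B_n(j-1,i)$.

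The second term is not a boundary correction. It is present for every $i<j$, and it mirrors the term $g_{j-1}e_{j-k-1}(g_0,\dots,g_{j-2})$ of your Pascal rule. This gives $R^{(n)}_{j,k}=R^{(n+1)}_{j-1,k-1}+(1-abcdq^{j+n-2})g_{j-1}R^{(n)}_{j-1,k}$. So the correction is the $(j-1,k)$ sum at the same $n$, not a $k-1$ sum. The paper proves the $B$-identity by splitting it into the two-term relations \eqref{eq:B-pair} and \eqref{eq:B-step}, each checked as a rational-function identity via ratios of consecutive $B$'s.

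Induction on $j$ from $R^{(n)}_{j,j}=1$, with $R^{(n)}_{m,-1}=0$, then finishes the proof. Your separate $k=0$ Saalsch\"utz evaluation is unnecessary. Your fallback through $E_n$ would still need this same identity.
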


The proof of \Cref{lem:inner-sum} is postponed to the next subsection. By \eqref{eq:4} and \Cref{lem:inner-sum}, our desired identity \eqref{eq:main-inverse-entry} follows from the following lemma.

\begin{lemma}\label{lem:outer-sum}
For \(0\le k\le n\), we have
\begin{equation*}
  \sum_{j=k}^{n}C_{n,j}B_n(j,k)=T_{n,k}.
\end{equation*}
\end{lemma}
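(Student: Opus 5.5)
The plan is to reduce the identity to a terminating balanced ${}_3\phi_2$ and evaluate it with the $q$-Pfaff--Saalsch\"utz formula. Write $N=n-k$ and $m=j-k$. Multiplying the definitions \eqref{eq:C-def} and \eqref{eq:B-def}, we first collect the factors that do not depend on $m$.

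There are three such simplifications.
\begin{itemize}
\item The binomials combine as $\qbinom{n}{j}\qbinom{j}{k}=\qbinom{n}{k}\qbinom{N}{m}$.
\item The two $c$-dependent Pochhammer symbols telescope: $\qp{acq^k/\xi}{j-k}\,\qp{acq^j/\xi}{n-j}=\qp{acq^k/\xi}{N}$.
\item The powers of $\xi$ give $\xi^{n-j}\xi^{j-k}=\xi^{N}$, up to the sign $(-1)^{N-m}$.
\end{itemize}
Comparing with \eqref{eq:T-def}, the common factor $\xi^N\qbinom{n}{k}\qp{acq^k/\xi}{N}$ cancels from both sides. Thus $\xi$ and $c$ drop out, and the lemma becomes the $c$-free identity
\[
\sum_{m=0}^{N}(-1)^{N-m}a^{-N}q^{\epsilon(m)}\qbinom{N}{m}
\qp{abq^{k+m},adq^{k+m}}{N-m}\,\qp{-abdq^{n-1},-aq^k}{m}=\qp{-b,-d}{N}.
\]
Here $q^{\epsilon(m)}$ denotes the total power of $q$ coming from $q^{-j(n-j)}q^{\binom k2-\binom j2}$.

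Next, rewrite $\qp{abq^{k+m}}{N-m}=\qp{abq^k}{N}/\qp{abq^k}{m}$, and similarly for $ad$. Also write $\qbinom{N}{m}=\frac{(q^{-N};q)_m}{(q;q)_m}(-1)^mq^{Nm-\binom m2}$. After pulling out $\qp{abq^k,adq^k}{N}$, the left side becomes a multiple of
\[
{}_3\phi_2\!\left[\begin{matrix}q^{-N},\,-abdq^{n-1},\,-aq^k\\ abq^k,\,adq^k\end{matrix};q,q\right].
\]
This series is balanced: $q\cdot q^{-N}\cdot(-abdq^{n-1})(-aq^k)=a^2bdq^{2k}=abq^k\cdot adq^k$. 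Before using this, one must check that the residual power of $q$ in the summand is exactly $q^m$. This means verifying that $\epsilon(m)$, together with $Nm-\binom m2$ and the signs, reduces to $q^{m}$ times an $m$-independent constant. I expect this $q$-exponent bookkeeping to be the main technical nuisance, and I would check it first, for instance at $N=1$.

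Then apply the $q$-Pfaff--Saalsch\"utz sum \cite{GR} with $A=-abdq^{n-1}$, $B=-aq^k$ and $C=abq^k$. This gives
\[
\frac{\qp{C/A,\,C/B}{N}}{\qp{C,\,C/(AB)}{N}},
\qquad C/A=-q^{1-N}/d,\quad C/B=-b,\quad C/(AB)=q^{1-n}/(ad).
\]
The factor $\qp{-b}{N}$ appears directly. To finish, reverse the two remaining Pochhammers using $\qp{xq^{1-N}}{N}=(-x)^Nq^{\binom N2+N(1-N)}\qp{1/x}{N}$:
\begin{itemize}
\item $\qp{-q^{1-N}/d}{N}$ becomes $\qp{-d}{N}$ times $d^{-N}$ and a power of $q$;
\item $\qp{q^{1-n}/(ad)}{N}$ becomes $\qp{adq^{k}}{N}$ times $(ad)^{-N}$ and a power of $q$.
\end{itemize}
The factor $\qp{adq^k}{N}$ then cancels against the prefactor, as does $\qp{abq^k}{N}$ against the denominator $\qp{C}{N}$. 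The remaining monomials, namely $a^{-N}$, $d^{\pm N}$, the signs and the powers of $q$, should cancel exactly and leave $\qp{-b,-d}{N}$, which proves the lemma. Should the $q$-power check fail to produce argument $q$, the fallback is to apply the reversal $m\mapsto N-m$ first, which exchanges the roles of the two Pochhammer blocks and again yields a balanced ${}_3\phi_2$.
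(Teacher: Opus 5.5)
Your proposal is correct and follows the paper's proof essentially step for step. Both arguments cancel the common factor $\xi^{n-k}\qbinom{n}{k}\qp{acq^k/\xi}{n-k}$, reduce to the same balanced ${}_3\phi_2$ with numerator parameters $q^{k-n},-aq^k,-abdq^{n-1}$ and denominator parameters $abq^k,adq^k$, evaluate it by $q$-Pfaff--Saalsch\"utz, and finish with the same two Pochhammer reversals. The $q$-exponent check you deferred does go through: $\epsilon(m)=-kN-mN+\binom{m+1}{2}$, so $(-1)^{N-m}q^{\epsilon(m)}\qbinom{N}{m}=(-1)^Nq^{-kN}\,q^{m}\,\qp{q^{-N}}{m}/\qp{q}{m}$, and the fallback is not needed.
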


\begin{proof}
  By \eqref{eq:C-def} and \eqref{eq:B-def}, the left-hand side is
\begin{align*}
  L&:=\sum_{j=k}^{n}(-\xi/a)^{n-j}(\xi/a)^{j-k}
    q^{-j(n-j)+\binom{k}{2}-\binom{j}{2}}
    \qbinom{n}{j}\qbinom{j}{k}\\
  &\quad\times\qp{abq^j,acq^j/\xi,adq^j}{n-j}
    \qp{-abdq^{n-1},-aq^k,acq^k/\xi}{j-k}\\
  &=\sum_{j=k}^{n} (-1)^{n-j} (\xi/a)^{n-k}
    q^{-j(n-j)+\binom{k}{2}-\binom{j}{2}}
    \qbinom{n}{k}\qbinom{n-k}{j-k} \\
  &\quad\times \qp{acq^k/\xi}{n-k} \qp{abq^j,adq^j}{n-j}
    \qp{-abdq^{n-1},-aq^k}{j-k}.
\end{align*}
Reindexing the sum by replacing \(j\) with \(j+k\) gives
\begin{equation}\label{eq:Lm-def}
  L= (-\xi/a)^{n-k} q^{-k(n-k)} \qp{acq^k/\xi}{n-k} \qbinom{n}{k} S,
\end{equation}
where
\[
  S = \sum_{j=0}^{n-k} (-1)^{j}
    q^{\binom{j+1}{2} - j(n-k)}
    \qbinom{n-k}{j}
    \qp{abq^{j+k},adq^{j+k}}{n-k-j}
    \qp{-abdq^{n-1},-aq^k}{j}.
\]
By the identities
\[
  \qbinom{m}{j} = \frac{(-1)^j q^{mj-\binom{j}{2}} (q^{-m};q)_j}{(q;q)_j},
  \qquad 
  \qp{xq^{j}}{m-j}=\frac{\qp{x}{m}}{\qp{x}{j}},
\]
and the $q$-Pfaff--Saalsch\"{u}tz summation
\cite[(II.12)]{GR}, we obtain
\begin{align*}
  S &= (abq^k;q)_{n-k} (adq^k;q)_{n-k} \cdot
    {}_3\phi_2\!\left[
    \begin{matrix}
      q^{k-n},\ -aq^k,\ -abdq^{n-1}\\
      abq^k,\ adq^k
    \end{matrix};q,q\right]\\
  &= (abq^k;q)_{n-k} (adq^k;q)_{n-k} \cdot
  \frac{\qp{-b,-q^{1-n+k}/d}{n-k}}
  {\qp{abq^k,q^{1-n}/ad}{n-k}}.
\end{align*}
Applying
\[
  \qp{x}{n-k}=(-x)^{n-k}q^{\binom{n-k}{2}}\qp{q^{1-n+k}/x}{n-k}
\]
successively with \(x=adq^k\) and \(x=-d\) gives
\[
  S =(-a)^{n-k}q^{k(n-k)}\qp{-b,-d}{n-k}.
\]
Substituting this into \eqref{eq:Lm-def}, we obtain
\[
  L=\xi^{n-k}\qbinom{n}{k}\qp{acq^k/\xi,-b,-d}{n-k}=T_{n,k},
\]
as desired.
\end{proof}

To complete the proof of \Cref{thm:main}, it remains to prove
\Cref{lem:inner-sum}. This will be done in the next subsection.

\subsection{Proof of \Cref{lem:inner-sum}}

For fixed \(0\le j\le n\), collecting the identities in
\Cref{lem:inner-sum} for \(0\le k\le j\) gives the following matrix
identity, where the indexed tuples are row vectors:
\[
  ((abcdq^{n+i-1}; q)_{j-i} e_{j-i}(g_0, g_1, \dots, g_{j-1}))_{i=0}^j
  (E_n(i,\ell))_{i,\ell=0}^j = (B_n(j,i))_{i=0}^j.
\]
The full matrices \((E_n(i,\ell))_{i,\ell=0}^n\) and
\((G_n(i,\ell))_{i,\ell=0}^n\) are lower triangular and inverse to each
other, so their leading \((j+1)\times(j+1)\) principal submatrices are
also inverses. Multiplying on the right by
\((G_n(i,\ell))_{i,\ell=0}^j\), we obtain the equivalent identity
\[
  ((abcdq^{n+i-1}; q)_{j-i} e_{j-i}(g_0, g_1, \dots, g_{j-1}))_{i=0}^j
   = (B_n(j,i))_{i=0}^j (G_n(i,\ell))_{i,\ell=0}^j.
\]
Taking the \(k\)-th component of this identity shows that
\Cref{lem:inner-sum} is equivalent to the following proposition.

\begin{proposition}\label{prop:path-decomposition}
For \(0\le k\le j\le n\), we have
\[
  (abcdq^{n+k-1}; q)_{j-k} e_{j-k}(g_0, g_1, \ldots, g_{j-1})
  = \sum_{i=k}^{j} B_n(j,i) G_n(i,k).
\]
\end{proposition}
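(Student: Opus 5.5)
I would prove \Cref{prop:path-decomposition} by induction on \(j\), using \Cref{lem:G-recurrence} to relate \(G_n\) to \(G_{n+1}\) and a matching recurrence for the left-hand side. The plan is to avoid evaluating \(G_n(i,k)\) explicitly and to check that both sides obey the same three-term recursion with the same boundary values.

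Denote the left-hand side by \(A_n(j,k)=(abcdq^{n+k-1};q)_{j-k}\,e_{j-k}(g_0,\ldots,g_{j-1})\) and the right-hand side by \(R_n(j,k)=\sum_i B_n(j,i)G_n(i,k)\).

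\textbf{Step 1 (boundary cases).}
\begin{itemize}
\item For \(j=k\), both sides equal \(1\), since \(B_n(k,k)=1\) and \(G_n(k,k)=1\).
\item For \(k=0\), the path sum is a product: \(G_n(i,0)=\prod_{r<i}w_n(r,0)\), with \(w_n(r,0)=(1-\xi bdq^{n-r-1})(1+aq^r)(1+cq^r)\).
\item In the \(k=0\) case the claim becomes a single-sum identity, \(e_j(g_0,\ldots,g_{j-1})(abcdq^{n-1};q)_j=\sum_i B_n(j,i)\prod_{r<i}w_n(r,0)\).
\item The generating function of \(e_j(g_0,\ldots,g_{j-1})\) in \(j\)-shifted form is a known product (the \(g_r\) are "Askey--Wilson grid" values). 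So I expect this to reduce to a terminating \(q\)-Pfaff--Saalsch\"utz or \(q\)-Chu--Vandermonde sum after writing \(\prod_{r<i}(1+aq^r)(1+cq^r)=(-a,-c;q)_i\).
\end{itemize}

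\textbf{Step 2 (recurrence for \(A\)).} I would derive a recurrence for \(A_n(j,k)\) that mimics \Cref{lem:G-recurrence}. The goal is to express \(A_{n+1}(j-1,k-1)\) as a combination of \(A_n(j,\cdot)\) and \(A_n(j-1,\cdot)\). This uses two inputs:
\begin{itemize}
\item \(e_m(g_0,\ldots,g_{j-1})=e_m(g_0,\ldots,g_{j-2})+g_{j-1}e_{m-1}(g_0,\ldots,g_{j-2})\);
\item the shift relation \((abcdq^{(n+1)+(k-1)-1};q)=(abcdq^{n+k-1};q)\), so the Pochhammer prefactor is unchanged under \((n,k)\mapsto(n+1,k-1)\).
\end{itemize}
Concretely, \(A_{n+1}(j-1,k-1)\) and \(A_n(j,k)\) share the Pochhammer factor but differ in the \(e\)-arguments (\(g_0,\ldots,g_{j-2}\) versus \(g_0,\ldots,g_{j-1}\)). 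Since \(A\) is monic in the top variable \(g_{j-1}\), peeling it off gives a relation in \(j\) alone.

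\textbf{Step 3 (matching on the \(R\) side).} Rather than a recurrence in \(j\), the cleanest route is probably this. Substitute \Cref{lem:G-recurrence} into \(R_{n+1}(j-1,k-1)=\sum_i B_{n+1}(j-1,i-1)G_{n+1}(i-1,k-1)\). Then use the explicit factorization of \(B\) to re-sum in \(i\). This gives \(R_{n+1}(j-1,k-1)\) as a combination \(\sum_i \beta_i\, G_n(i,k)\), where
\[
\beta_i=B_{n+1}(j-1,i-1)-B_{n+1}(j-1,i)\bigl(w_n(i,0)+w_n(i,1)\bigr)+B_{n+1}(j-1,i+1)\,w_n(i,0)\,w_n(i+1,1).
\]
It then suffices to prove the \emph{local} identity \(\beta_i=B_n(j,i)-\lambda\,B_n(j-1,i)\) for an appropriate scalar \(\lambda\) independent of \(i\) (namely the one appearing in the \(A\)-recurrence, essentially \(g_{j-1}\) times a Pochhammer ratio). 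After dividing by \(B_n(j,i)\), this is a rational-function identity in \(q^i\). Combined with Step 2 and induction on \(k\) (with the base \(k=0\) from Step 1), this closes the argument.

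\textbf{Main obstacle.} I expect the hard step to be finding the correct scalar \(\lambda\), i.e., the exact form of the \(A\)-recurrence, and then verifying the three-term local identity for \(B\). This identity is a polynomial identity of moderate degree in \(q^i\), and the cancellation between the odd weight \(w_n(i,1)\) and the even weight \(w_n(i,0)\) must be checked carefully. If the local identity fails as stated, my fallback is to evaluate \(A_n(j,k)\) directly. I would write \(e_{j-k}(g_0,\ldots,g_{j-1})\) via the \(q\)-binomial expansion of \(\prod_r(t+g_r)\) in the basis \((-aq^k;q)\). Then I would compare with \(\sum_i B_n(j,i)G_n(i,k)\) by conditioning on the last column of the path, which reduces to a \(q\)-Pfaff--Saalsch\"utz evaluation as in \Cref{lem:outer-sum}.
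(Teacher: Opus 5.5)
Your plan is essentially the paper's proof: both sides are shown to satisfy $F_n(j,k)=F_{n+1}(j-1,k-1)+(1-abcdq^{j+n-2})g_{j-1}F_n(j-1,k)$, the right-hand side via \Cref{lem:G-recurrence} and exactly your $\beta_i$, and your local identity $\beta_i=B_n(j,i)-\lambda B_n(j-1,i)$ with $\lambda=(1-abcdq^{j+n-2})g_{j-1}$ is precisely \Cref{lem: Bn recurrence}, which does hold (the paper proves it by splitting it into the two auxiliary identities \eqref{eq:B-pair} and \eqref{eq:B-step}), so no fallback is needed. The only real difference is that your separate $q$-series evaluation for $k=0$, which you left unproved, is unnecessary: with the conventions $G_{n+1}(-1,-1)=1$, $G_{n+1}(m,-1)=0$ for $m\ne-1$, and $B_{n+1}(j-1,-1)=0$ for $j\ge1$, the same recurrence holds at $k=0$ with vanishing first term, so a single induction on $j$ starting from $A_n(j,j)=R_n(j,j)=1$ covers all $k$.
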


To prove this proposition, we need a lemma. Recall that
\begin{align*}
  B_n(j,i)
  &= (\xi/a)^{j-i}q^{\binom{i}{2}-\binom{j}{2}}\qbinom{j}{i}
    \qp{-abdq^{n-1},-aq^i,acq^i/\xi}{j-i},\\
  w_n(i,0)&=(1-\xi bdq^{n-i-1})(1+aq^i)(1+cq^i),\\
  w_n(i,1)&=(1-bdq^{n-i})(\xi-acq^{i-1})(1-q^i),\\
  g_j&=(1+aq^j)(1+\xi a^{-1}q^{-j}).
\end{align*}
Throughout this section, we use the convention \(B_n(-1,-1)=1\) and
\(B_n(j,k)=0\) for all other pairs \((j,k)\) outside the range
\(0\le k\le j\).

\begin{lemma}\label{lem: Bn recurrence}
  For \(0\le i\le j\le n\), we have
    \begin{multline}
  B_n(j,i) = B_{n+1}(j-1,i-1)
  - \left( w_n(i,0)+w_n(i,1) \right)
  B_{n+1}(j-1,i)\\
  +w_n(i,0)w_n(i+1,1)
  B_{n+1}(j-1,i+1)
  +(1-abcdq^{j+n-2})g_{j-1}B_n(j-1,i).
\end{multline}
\end{lemma}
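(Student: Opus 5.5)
Since every term in the recurrence is an explicit product, I will verify it by dividing through by a common factor and reducing to a polynomial identity in a few auxiliary variables.

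\emph{Step 1: boundary cases.} I first dispose of the cases where the conventions matter. If \(i=j\), then \(B_n(j,j)=1\), and \(B_{n+1}(j-1,j-1)=1\) (including \(j=0\), where this is \(B_{n+1}(-1,-1)=1\)). The terms \(B_{n+1}(j-1,j)\), \(B_{n+1}(j-1,j+1)\) and \(B_n(j-1,j)\) all vanish, so the identity holds. If \(i=0<j\), the term \(B_{n+1}(j-1,-1)\) vanishes by convention, and the same generic computation below applies with that term set to zero. Note also that \(w_n(0,1)=0\) because of the factor \(1-q^0\).

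\emph{Step 2: normalization.} For \(0\le i<j\le n\) with \(m=j-i\), I divide every term by \(B_n(j,i)\). The ratios are then explicit rational functions. For example,
\[
B_{n+1}(j-1,i-1)/B_n(j,i)
=q^{\,j-i}\frac{[i]_q}{[j]_q}\cdot\frac{(-abdq^{n};q)_{m}(-aq^{i-1};q)_{m}(acq^{i-1}/\xi;q)_{m}}{(-abdq^{n-1};q)_{m}(-aq^i;q)_{m}(acq^i/\xi;q)_{m}}.
\]
This simplifies via \((xq;q)_m/(x;q)_m=(1-xq^m)/(1-x)\) to a ratio of linear factors in \(q^i\) and \(q^j\). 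The same simplification works for \(B_{n+1}(j-1,i)\), \(B_{n+1}(j-1,i+1)\) and \(B_n(j-1,i)\); each involves only shifts by \(\pm1\) in the two indices. After clearing denominators, the statement becomes a polynomial identity in \(X=q^i\), \(Y=q^j\), \(Q=q^n\), \(a,b,c,d,\xi,q\).

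\emph{Step 3: organize the verification.} I will regard both sides as polynomials in \(Y\) for fixed \(X\). Note that \(g_{j-1}=(1+aY/q)(1+\xi q/(aY))\) and that \(abcdq^{j+n-2}=abcdYQ/q^2\). To check the identity, I will compare enough specializations. At \(Y=X\) (i.e.\ \(m=0\)) the identity should reduce to Step 1. Zeros of factors like \(1+aY/q\) or \(1-acX q^{m}/\xi\) kill specific terms and give easy checks. Matching the leading and trailing coefficients in \(Y\) then pins down the remaining freedom, since the degree in \(Y\) is small (at most about \(4\)).

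The main obstacle is Step 3: the \(w_n\) factors introduce \(bd\) and \(c\) dependence that must cancel against the \((-abdq^{n-1};q)\) and \((1-abcdq^{j+n-2})\) factors, and this cancellation is not termwise. As a first attempt I would group the \(w_n(i,0)\)-terms separately from the \(w_n(i,1)\)-terms; after normalization each group should factor, making the cancellation visible. If that grouping fails, I would fall back on a direct symbolic check of the resulting polynomial identity in \(X,Y,Q\) with the parameters \(a,b,c,d,\xi,q\) left free.
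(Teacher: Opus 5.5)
Your Step~1 is correct (the conventions at $i=j$ and $i=0$, and $w_n(0,1)=0$), and so is your ratio for $B_{n+1}(j-1,i-1)/B_n(j,i)$. Your brute-force route is a viable alternative to the paper's, but there is a gap: the lemma \emph{is} the verification of an explicit rational identity, and Step~3 only says how you might verify it. Nothing is actually checked.

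The interpolation plan is not yet an argument. You need the exact common denominator and degree in $Y=q^j$, and then one more independent condition than that degree; ``at most about 4'' and ``enough specializations'' leave this open. The specializations are also not as cheap as you suggest. The $ac$-factor that actually occurs after normalizing is $q\xi-acq^{j}$ (not $1-acXq^m/\xi$), and at its zero nothing on the right-hand side drops out. At $q^j=-q/a$ only the left side and the $g_{j-1}$-term disappear, so a genuine three-term identity remains.

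Your route can be completed. Set $X=q^i$, $Y=q^j$, $Q=q^n$, divide by $B_n(j,i)$, and multiply by
\[
(1-Y)(1+abdQ/q)(1+aY/q)(q\xi-acY)(1+abdQY/(q^2X)).
\]
Every term then becomes a polynomial of degree at most $4$ in $Y$. The $Y^4$-coefficients and the constant terms match; on the right only the $g_{j-1}$-term contributes to them. The difference also vanishes at $Y=X$, $Y=-q/a$ and $Y=-q^2X/(abdQ)$. Each of these computations still has to be carried out, and none is in the proposal.

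The paper avoids a monolithic check by making precise the grouping you list as a first attempt. Split $-(w_n(i,0)+w_n(i,1))B_{n+1}(j-1,i)$ and write the right-hand side as $\bigl[B_{n+1}(j-1,i-1)-w_n(i,1)B_{n+1}(j-1,i)\bigr]-w_n(i,0)\bigl[B_{n+1}(j-1,i)-w_n(i+1,1)B_{n+1}(j-1,i+1)\bigr]$ plus the $g_{j-1}$-term. Both brackets collapse by the two-term identity
\[
  B_{n+1}(j-1,\ell)-w_n(\ell+1,1)\,B_{n+1}(j-1,\ell+1)
  =\frac{1+aq^{j-1}}{1+aq^{\ell}}\,B_n(j-1,\ell)
  \qquad(-1\le\ell<j),
\]
applied at $\ell=i-1$ and at $\ell=i$. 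This removes every $B_{n+1}$. One still has to prove the remaining fixed-$n$ recurrence
\[
  B_n(j,i)=\frac{1+aq^{j-1}}{1+aq^{i-1}}B_n(j-1,i-1)
  -\frac{1+aq^{j-1}}{1+aq^{i}}\,w_n(i,0)B_n(j-1,i)
  +(1-abcdq^{j+n-2})g_{j-1}B_n(j-1,i).
\]
After dividing by $B_n(j-1,\ell)$, respectively $(1+aq^{j-1})B_n(j-1,i)$, each identity is a one-line check. The first amounts to $(1+abdq^{j+n-\ell-2})(1+aq^\ell)-(aq^\ell-abdq^{n-1})(1-q^{j-\ell-1})=(1+aq^{j-1})(1+abdq^{n-1})$. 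The second reduces essentially to $(1-q^j)(\xi/(aq^{j-1})-c)-(1-q^i)(\xi/(aq^{i-1})-c)=(1-q^{j-i})(\xi/(aq^{j-1})-cq^i)$.

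So your instinct about which terms belong together is right, and your predicted factorization of each group is exactly what happens. What is missing is carrying it out and handling the leftover $g_{j-1}$-recurrence.
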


\begin{proof}
  The case \(i=j\) is immediate, since both sides equal \(1\); for
  \(j=0\), this uses the convention \(B_{n+1}(-1,-1)=1\). Suppose
  \(0\le i<j\). We claim that
\begin{equation}\label{eq:B-pair}
  \frac{1+aq^{j-1}}{1+aq^\ell}B_n(j-1,\ell)
  =B_{n+1}(j-1,\ell)-w_n(\ell+1,1)B_{n+1}(j-1,\ell+1)
\end{equation}
for \(-1\le \ell<j\), and that
\begin{multline}\label{eq:B-step}
  B_n(j,i)=\frac{1+aq^{j-1}}{1+aq^{i-1}}B_n(j-1,i-1)\\
  -\frac{1+aq^{j-1}}{1+aq^i}w_n(i,0)B_n(j-1,i)
  +(1-abcdq^{j+n-2})g_{j-1}B_n(j-1,i).
\end{multline}
Substituting \eqref{eq:B-pair} into \eqref{eq:B-step}, first with
\(\ell=i-1\) and then with \(\ell=i\), yields the stated recurrence. Thus it
suffices to prove these identities. To do this, we use the following
ratios, obtained directly from \eqref{eq:B-def}:
\begin{align}
  \frac{B_{n+1}(j-1,\ell)}{B_n(j-1,\ell)}
  &=\frac{1+abdq^{j+n-\ell-2}}{1+abdq^{n-1}},
  \label{eq:B-ratio-n}\\
  \frac{B_n(j,\ell)}{B_n(j-1,\ell)}
  &=\frac{(1-q^j)(1+abdq^{j+n-\ell-2})(1+aq^{j-1})(\xi-acq^{j-1})}
    {aq^{j-1}(1-q^{j-\ell})},
  \label{eq:B-ratio-m}\\
  \frac{B_n(j-1,\ell+1)}{B_n(j-1,\ell)}
  &=\frac{aq^\ell(1-q^{j-\ell-1})}
    {(1-q^{\ell+1})(1+abdq^{j+n-\ell-3})(1+aq^\ell)(\xi-acq^\ell)}.
  \label{eq:B-ratio-i}
\end{align}
The first two hold for \(0\le \ell<j\), and the third for \(0\le \ell<j-1\).

We first prove \eqref{eq:B-pair}. For \(0\le \ell<j-1\), combine
\eqref{eq:B-ratio-n} with \eqref{eq:B-ratio-i} at \(n+1\) to express $B_{n+1}(j-1,\ell+1)/B_n(j-1,\ell)$.
Using \eqref{eq:coordinate-weight-def}
and dividing by \(B_n(j-1,\ell)\), we reduce \eqref{eq:B-pair} to
\[
  \frac{1+aq^{j-1}}{1+aq^\ell}
  =\frac{(1+abdq^{j+n-\ell-2})(1+aq^\ell)-(aq^\ell-abdq^{n-1})(1-q^{j-\ell-1})}
    {(1+abdq^{n-1})(1+aq^\ell)}.
\]
The case \(\ell=j-1\) is immediate. The identity also holds for \(\ell=-1\),
since \(w_n(0,1)=0\).

We next prove \eqref{eq:B-step} by dividing both sides by
\((1+aq^{j-1})B_n(j-1,i)\). Apply \eqref{eq:B-ratio-m} at \(\ell=i\)
and, when \(i>0\), take the reciprocal of \eqref{eq:B-ratio-i} at \(\ell=i-1\).
For \(i=0\), the term containing \(B_n(j-1,-1)\) is zero.
In either case, we obtain
\begin{align*}
  &\frac{B_n(j,i)}{(1+aq^{j-1})B_n(j-1,i)}
    -\frac{B_n(j-1,i-1)}{(1+aq^{i-1})B_n(j-1,i)}\\
  &=\frac{1+abdq^{j+n-i-2}}{1-q^{j-i}}
    \left[(1-q^j)\left(\frac{\xi}{aq^{j-1}}-c\right)
      -(1-q^i)\left(\frac{\xi}{aq^{i-1}}-c\right)\right]\\
  &=(1+abdq^{j+n-i-2})\left(\frac{\xi}{aq^{j-1}}-cq^i\right)\\
  &=(1-abcdq^{j+n-2})\left(1+\frac{\xi}{aq^{j-1}}\right)
    -(1-\xi bdq^{n-i-1})(1+cq^i)\\
  &=\frac{(1-abcdq^{j+n-2})g_{j-1}}{1+aq^{j-1}}
    -\frac{w_n(i,0)}{1+aq^i}.
\end{align*}
This proves \eqref{eq:B-step}.
\end{proof}

We are now ready to prove \Cref{prop:path-decomposition}.

\begin{proof}[Proof of \Cref{prop:path-decomposition}]
Let \(L^{(n)}_{j,k}\) and \(R^{(n)}_{j,k}\) be the left-hand side and
right-hand side of the identity in \Cref{prop:path-decomposition}:
\begin{align*}
 L^{(n)}_{j,k} &:= (abcdq^{n+k-1}; q)_{j-k} e_{j-k}(g_0, g_1, \ldots, g_{j-1}),\\
  R^{(n)}_{j,k} &:= \sum_{i=k}^{j} B_n(j,i) G_n(i,k).
\end{align*}
For \(0\le k<j\le n\), we claim that they satisfy the same recurrences:
\begin{align}
  L^{(n)}_{j,k}
  &=L^{(n+1)}_{j-1,k-1}
  +(1-abcdq^{j+n-2})g_{j-1}L^{(n)}_{j-1,k},
  \label{eq:L-contiguous}\\
  R^{(n)}_{j,k}
  &=R^{(n+1)}_{j-1,k-1}
  +(1-abcdq^{j+n-2})g_{j-1}R^{(n)}_{j-1,k},
  \label{eq:R-contiguous}
\end{align}
where we define \(L^{(n)}_{m,-1}=R^{(n)}_{m,-1}=0\) for all \( m \).
Together with the diagonal values \(L^{(n)}_{j,j}=R^{(n)}_{j,j}=1\),
these recurrences prove the proposition by induction on \(j\).

We first prove \eqref{eq:L-contiguous}.
Since
\[
  e_{j-k}(g_0, \ldots, g_{j-1}) = g_{j-1} e_{j-k-1}(g_0, \ldots, g_{j-2}) + e_{j-k}(g_0, \ldots, g_{j-2}),
\]
we have
\begin{align*}
  L^{(n)}_{j,k}
  &=(abcdq^{n+k-1};q)_{j-k} \bigl(
    g_{j-1}e_{j-k-1}(g_0,\ldots,g_{j-2})
    +e_{j-k}(g_0,\ldots,g_{j-2})\bigr)\\
  &=(1-abcdq^{j+n-2})g_{j-1}L^{(n)}_{j-1,k}  +L^{(n+1)}_{j-1,k-1}.
\end{align*}
For \(k=0\), the term \(e_j(g_0,\ldots,g_{j-2})\) vanishes, in
agreement with the convention \(L^{(n+1)}_{j-1,-1}=0\).
This proves \eqref{eq:L-contiguous} for all \(0\le k<j\).

We next prove \eqref{eq:R-contiguous}. Applying
\Cref{lem:G-recurrence}, we obtain
\[
  R^{(n+1)}_{j-1,k-1}
  = \sum_{i=k}^{j} B_{n+1}(j-1,i-1) G_{n+1}(i-1, k-1) = X-Y+Z,
\]
where
\begin{align*}
  X &= \sum_{i=k}^{j} B_{n+1}(j-1,i-1) G_n(i,k),\\
  Y &= \sum_{i=k}^{j} B_{n+1}(j-1,i-1)(w_n(i-1,0) + w_n(i-1,1)) G_n(i-1,k), \\
  Z &= \sum_{i=k}^{j} B_{n+1}(j-1,i-1) w_n(i-2,0)w_n(i-1,1) G_n(i-2,k).
\end{align*}
Since \(B_{n+1}(j-1,\ell)=0\) for \(\ell>j-1\) and
\(G_n(\ell,k)=0\) for \(\ell<k\), shifting the summation index
\(i\mapsto i+1\) in \(Y\) and \(i\mapsto i+2\) in \(Z\) gives
\begin{align*}
  Y &= \sum_{i=k}^{j} B_{n+1}(j-1,i)(w_n(i,0) + w_n(i,1)) G_n(i,k), \\
  Z &= \sum_{i=k}^{j} B_{n+1}(j-1,i+1)w_n(i,0)w_n(i+1,1) G_n(i,k).
\end{align*}
Combining these expressions with the definition of \(X\), we obtain
\begin{multline*}
  X-Y+Z
  = \sum_{i=k}^{j} \Bigl( B_{n+1}(j-1,i-1)
     - B_{n+1}(j-1,i)(w_n(i,0) + w_n(i,1))\\
   + B_{n+1}(j-1,i+1)
  w_n(i,0)w_n(i+1,1) \Bigr) G_n(i,k).
\end{multline*}
Then, by \Cref{lem: Bn recurrence}, we have
\begin{align*}
 R^{(n+1)}_{j-1,k-1}=X-Y+Z
  &= \sum_{i=k}^{j} \Bigl( B_n(j,i) - (1 - abcdq^{j+n-2}) g_{j-1} B_n(j-1,i) \Bigr) G_n(i,k) \\
  &= R^{(n)}_{j,k} - (1 - abcdq^{j+n-2}) g_{j-1} R^{(n)}_{j-1,k}.
\end{align*}
This proves \eqref{eq:R-contiguous} for all \(0\le k<j\), which completes the proof.
\end{proof}

\appendix

\section{Heuristic discovery of the path formula for $\widetilde{\nu}^{Z}_{n,k}$}
\label[appendix]{sec:appendix}

In this appendix, we outline the heuristic reasoning and combinatorial
intuition that led us to the formula for $\widetilde{\nu}^{Z}_{n,k}$
in \Cref{thm:ASEP-positivity}. Our strategy had three steps. First, we
found an explicit formula for \( \widetilde{\nu}^{Z}_{n,k} \) when
\( k=0 \). Second, we expressed \( \widetilde{\nu}^{Z}_{n,k} \) as a
weight generating function for paths under a certain specialization of
the parameters. Third, we modified the weights so that
\( \widetilde{\nu}^{Z}_{n,k} \), in the general case, is the
generating function for the same paths.

Our first observation was the following formula for
\( \widetilde{\nu}^{Z}_{n,k} \) when \( k=0 \).

\begin{proposition}\label{prop:constant-term}
For every integer $n\ge0$, we have
\[
  \widetilde{\nu}^{Z}_{n,0}
  =\sum_{k=0}^{n}\qbinom{n}{k}
    \prod_{i=0}^{k-1}
      (\beta+\xi\delta q^{n-1-i})
      \bigl((\alpha+\gamma q^i)[i]_q+q^i\bigr)
    \prod_{i=0}^{n-k-1}
      \bigl((\beta+\delta q^i)[i]_q+q^i\bigr)
      (\xi\alpha+\gamma q^{n-1-i}).
\]
\end{proposition}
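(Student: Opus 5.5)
The plan is to derive \Cref{prop:constant-term} as the case $k=0$ of \Cref{thm:ASEP-positivity}. Since that theorem is already proved, this is only a matter of evaluating the right-hand side of \eqref{eq:ASEP-positive-formula} at $k=0$. So set $k=0$ and consider $\widetilde{\nu}^Z_{n,0}=\sum_{i=0}^n T^Z_{n,i}G^Z_n(i,0)$.

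\textbf{Step 1: evaluate $G^Z_n(i,0)$.} Since $u_0=v_0=(0,0)$ and every vertex $(r,s)$ satisfies $s\ge0$, a path from $u_0$ can never take a south step from height $0$. Hence the only path from $u_0$ to $v_i$ is the horizontal one. This is the observation already used in the proof of \Cref{lem:G-recurrence}. Therefore
\[
G^Z_n(i,0)=\prod_{r=0}^{i-1}w^Z_n(r,0)=\prod_{r=0}^{i-1}(\beta+\xi\delta q^{n-r-1})\bigl((\alpha+\gamma q^{r})[r]_q+q^{r}\bigr),
\]
using the even case $s=2t$ with $t=0$ of \eqref{eq:W-ASEP-def}. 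This matches the first product in \Cref{prop:constant-term}, with summation index $k$ there playing the role of $i$.

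\textbf{Step 2: rewrite $T^Z_{n,i}$.} By \eqref{eq:T-ASEP-def},
\[
T^Z_{n,i}=\qbinom{n}{i}\prod_{r=0}^{n-i-1}\bigl(q^r+[r]_q(\beta+\delta q^r)\bigr)(\xi\alpha+\gamma q^{i+r}).
\]
The proposition instead has the factor $(\xi\alpha+\gamma q^{n-1-r})$ paired with $\bigl((\beta+\delta q^r)[r]_q+q^r\bigr)$. Since the whole product is taken over $r$, the factors can be regrouped. As $r$ runs over $0,\dots,n-i-1$, the exponent $i+r$ runs over $i,\dots,n-1$, and so does $n-1-r$. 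Thus the two products of $\gamma$-factors agree. Multiplying Steps 1 and 2 and summing over $i$ gives exactly the stated formula.

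\textbf{Main obstacle.} The only delicate point is the bookkeeping in Step 2: checking that the reindexing of the $\gamma$-exponents is legitimate because the product fully factorizes. Everything substantive was already done in the proof of \Cref{thm:main}.

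As a sanity check, I would compare with $n=1$. The formula gives $(\xi\alpha+\gamma)+(\beta+\xi\delta)$, which agrees with $\widetilde\nu^Z_{1,0}$ in \Cref{ex:small-PZ}.

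If one instead wanted a proof independent of the main theorem, as in the discovery narrative of the appendix, I would take the $k=0$ case of \Cref{lem:nuhat-explicit}, which is a single sum over $j$. I would then attempt a $q$-Pfaff--Saalsch\"utz or $q$-Chu--Vandermonde evaluation mirroring \Cref{lem:outer-sum}. I expect the hard part there to be handling $e_j(g_0,\dots,g_{j-1})=\prod(1+g_r)$-type terms. Given the results available, however, the specialization argument above is the route I would commit to.
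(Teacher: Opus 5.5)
Your proof is correct, but it takes a different route from the paper's. At $k=0$ the only path from $u_0$ to $v_i$ in $\mathcal{G}_n$ is the horizontal one, so $G^Z_n(i,0)=\prod_{r=0}^{i-1}w^Z_n(r,0)$. The $\gamma$-factors of $T^Z_{n,i}$ reindex exactly as you say. Since nothing in \Cref{sec:proof-main} uses \Cref{prop:constant-term}, there is no circularity.

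The paper proves the proposition without the main theorem. It quotes the classical expansion \cite[Equation~(15.2.8)]{Ismail} of $\PAW_n$ as a sum over $k$ of $(az,cz;q)_k(bz^{-1},dz^{-1};q)_{n-k}z^{n-2k}/\bigl((q,ac;q)_k(q,bd;q)_{n-k}\bigr)$. It then notes that $x=0$ for $\PZ_n$ corresponds to $z=-\sqrt{\xi}$ with the rescaled parameters, so $az,cz,bz^{-1},dz^{-1}$ become $-a,-c,-b,-d$. Finally it converts each factor to ASEP parameters via identities such as $(1+aq^i)(1+cq^i)=\frac{1-q}{\alpha}\bigl((\alpha+\gamma q^i)[i]_q+q^i\bigr)$ and $1-acq^i/\xi=(\xi\alpha+\gamma q^i)/(\xi\alpha)$.

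The two routes buy different things:
\begin{itemize}
\item Your route is nearly free once the theorem is available. However, it inherits all of \Cref{sec:proof-main} (\Cref{prop:factorial_AW}, \Cref{lem:inner-sum}, the $q$-Pfaff--Saalsch\"utz step) and gives no independent check.
\item The paper's route is self-contained and rests on a single known identity. It also serves the appendix's purpose: there the positive expression must arise mechanically, before the general theorem is known. It is the source from which $w^Z_n(r,0)$ and the factors of $T^Z_{n,k}$ were read off.
\end{itemize}

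One correction to your fallback idea: $e_j(g_0,\dots,g_{j-1})$ is just the product $g_0g_1\cdots g_{j-1}$, not a $\prod(1+g_r)$-type expression. So the $k=0$ case of \Cref{lem:nuhat-explicit} is essentially the ${}_4\phi_3$ of \Cref{def:AW-recurrence} evaluated at this point. The real work on that route would be transforming it into the positive form, which is exactly what Ismail's expansion provides.
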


\begin{proof}
  The following identity is known (see \cite[Equation~(15.2.8)]{Ismail}, rewritten here for the monic normalization $\PAW_n$):
\begin{align}\label{rr}
    \frac{2^n(abcdq^{n-1};q)_n \PAW_n(x;a,b,c,d\vert q)}{(q,ac,bd;q)_n}
    =
    \sum_{k=0}^{n}
    \frac{(az,cz;q)_k(bz^{-1},dz^{-1};q)_{n-k}}{(q,ac;q)_k(q,bd;q)_{n-k}}
    z^{(n-2k)},
\end{align}
where \( x= (z+z^{-1})/2 \).
By \eqref{eq:PZ-relation}, we have
\begin{equation*}
  \nu^{Z}_{n,0}
  =\PZ_n(0)
  =\left(\frac{2\sqrt{\xi}}{1-q}\right)^n
  \PAW_{n}\left(-\frac{1+\xi}{2\sqrt{\xi}};\frac{a}{\sqrt{\xi}},
  b\sqrt{\xi},\frac{c}{\sqrt{\xi}},d\sqrt{\xi} \bigg| q\right).
\end{equation*}
Therefore, substituting \(z=-\sqrt{\xi}\) into \eqref{rr} and using
the change of variables in \Cref{def:parameter-change}, we obtain the
formula.
\end{proof}

\Cref{prop:constant-term}, which corresponds to the special case $k=0$ of \Cref{thm:ASEP-positivity}, yields a manifestly positive formula for $\widetilde{\nu}^{Z}_{n,0}$. A particularly appealing feature of this proof is that the positive expression does not need to be guessed in advance; rather, it arises directly and mechanically from \eqref{rr}.

Now we proceed to find a path model for \( \widetilde{\nu}^{Z}_{n,k} \).
Since finding a path model directly for the general case is
challenging, we first consider a special case.

Let $\nu^{S}_{n,k}$ denote the specialization of
$\widetilde{\nu}^{Z}_{n,k}$ at
\begin{equation}\label{eq: specialization}
  \beta = \xi = 1 \quad \text{and} \quad \delta = 0.
\end{equation}
Let \(\mathcal{S}_n\) be the directed graph with vertex set \(V_S\)
and edge set \(E_S\) given by
\begin{align*}
  V_S&=\bigl\{(r,s)\in\mathbb{Z}^2:0\le r\le n,\ 0\le s\le 2r+1\bigr\},\\
  E_S&=\bigl\{(u,v)\in V_S^2:u=(r,s),\ v=(r,s-1)\bigr\}
     \cup\bigl\{(u,v)\in V_S^2:u=(r,s),\ v=(r+1,s)\bigr\}.
\end{align*}
Every south edge \((r,s)\to(r,s-1)\) has weight \(1\), and the east
edge \((r,s)\to(r+1,s)\) has weight
\[
  w^S(r,s):=
  \begin{cases}
    X_{r-t},&\text{if \(s=2t+1\)},\\
    Y_{r-t},&\text{if \(s=2t\)},
  \end{cases}
\]
where
\begin{align*}
  X_i &= (\alpha + \gamma q^{i})[i]_q + q^{i}, \\
  Y_i &= (\alpha + \gamma q^{i})[i+1]_q.
\end{align*}
For \(0\le i\le n\), let \( U_i=(i,2i+1) \) and \( V_i=(i,0) \). For
vertices \(u,v\) of \(\mathcal{S}_n\), let \(\mathcal{P}_S(u\to v)\)
denote the set of directed paths from \(u\) to \(v\), and let
\(\wt^S(\pi)\) be the product of the edge weights of a path \(\pi\).

\begin{figure}
  \centering
  \begin{tikzpicture}[x=1.45cm,y=0.55cm]
  \def\N{4}
  \pgfmathtruncatemacro{\LastColumn}{\N-1}

  \foreach \k in {0,...,\N} {
    \pgfmathtruncatemacro{\Ysink}{2*\k}
      \draw[awlane] (\k,-1) -- (\k,\Ysink);
    \fill (\k,\Ysink) circle (2.2pt)
      node[awendpoint,above left=2pt] {\(U_{\k}\)};
    }

  \foreach \r in {0,...,\LastColumn} {
    \pgfmathtruncatemacro{\Smax}{2*\r}
    \foreach \s in {-1,...,\Smax} {
      \draw[awstep] (\r,\s) -- (\r+1,\s);
    }
  }
  
  \foreach \r in {0,...,\LastColumn} {
    \pgfmathtruncatemacro{\Smax}{\r}
    \foreach \s in {0,...,\Smax} {
      \pgfmathtruncatemacro{\ind}{\r - \s}
      \node[above,inner sep=1pt] at (\r + 0.5, 2*\s) {\( X_{\ind} \)};
    }
  }
  \foreach \r in {0,...,\LastColumn} {
    \pgfmathtruncatemacro{\Smax}{\r}
    \foreach \s in {0,...,\Smax} {
      \pgfmathtruncatemacro{\ind}{\r - \s}
      \node[above,inner sep=1pt] at (\r + 0.5, 2*\s - 1) {\( Y_{\ind} \)};
    }
  }
  
  \foreach \i in {0,...,\N} {
    \fill (\i,-1) circle (2.2pt)
      node[awendpoint,below=2pt] {\(V_{\i}\)};
  }
  \end{tikzpicture}
  \caption{A lattice model for \( \nu_{4,k}^S \).}
  \label{fig:lattice_initial}
\end{figure}

Our second observation was that
\[
  \nu^S_{n,k}
  =\sum_{\pi\in\mathcal{P}_S(U_k\to V_n)}\wt^S(\pi)
  \qquad(0\le k\le n).
\]
See \Cref{fig:lattice_initial} for \(\mathcal{S}_4\). Guided by this
observation, our strategy for finding a path model for
$\widetilde{\nu}^{Z}_{n,k}$ in the general case was to modify the
weights \( X_i \) and \( Y_i \) in \Cref{fig:lattice_initial} so that
\begin{equation}\label{eq:6}
  \widetilde{\nu}^Z_{n,k}
  =\sum_{\pi\in\mathcal{P}_S(U_k\to V_n)}\wt(\pi)
  \qquad(0\le k\le n),
\end{equation}
where \( \wt(\pi) \) is the product of the modified weights of the
steps in \( \pi \). Motivated by the factors appearing in the identity
in \Cref{prop:constant-term}, we found that generalizing $X_i$ and
$Y_i$ required introducing an additional index \( \ell \):
\begin{align*}
    X_{i, \ell} &=  \bigl((\alpha + \gamma q^{i}) [i]_q + q^{i}\bigr) (\beta + \xi \delta q^{\ell}), \\
    Y_{i, \ell} &= (\xi \alpha + \gamma q^{i}) (\beta + \delta q^{\ell}) [i+1]_q.
\end{align*}
Note that for every integer $\ell$, the new weights $X_{i,\ell}$ and
$Y_{i,\ell}$ specialize to $X_i$ and $Y_i$ under \eqref{eq:
  specialization}. In what follows, we refer to the second index
$\ell$ as the \emph{decoration index}.

Our goal is now to replace the weights \( X_i \) and \( Y_{i'} \) in
\Cref{fig:lattice_initial} with \( X_{i,\ell} \) and \( Y_{i',\ell'} \),
respectively, choosing \( \ell \) and \( \ell' \) so that
\eqref{eq:6} holds.

First, we considered the weights of the east steps in the last column.
We observed that \( \widetilde{\nu}^{Z}_{n,n-1} \) was very close to
\[
   (X_{0,2n-2}+Y_{0,2n-3})+(X_{1,2n-4}+Y_{1,2n-5})+\dots+ (X_{n-1,0}+Y_{n-1,-1}).
\]
Moreover, if we replace the last term \( Y_{n-1,-1} \)
by
\[
  Y_{n-1}^{(n)} := (\xi\alpha+\gamma q^{n-1})[n]_q,
\]
then we have the equality
\[
  \widetilde{\nu}^{Z}_{n,n-1} = (X_{0,2n-2}+Y_{0,2n-3})+(X_{1,2n-4}+Y_{1,2n-5})+\dots+ (X_{n-1,0}+Y^{(n)}_{n-1}).
\]
This determines the weights of the east steps in the last column
between \( x=n-1 \) and \( x=n \) as in the rightmost diagram in
\Cref{fig:first_lift_one_line}.

\begin{figure}
  \centering
  \begin{tikzpicture}[x=1.45cm,y=0.65cm]
    \begin{scope}
      \draw[awlane] (0,-1) -- (0,6);
      \draw[awlane] (1,-1) -- (1,6);
      \fill (0,6) circle (2.2pt)
        node[awendpoint,above left=2pt] {$U_3$};
      \fill (0,-1) circle (2.2pt)
        node[awendpoint,below=2pt] {$V_3$};
      \fill (1,-1) circle (2.2pt)
        node[awendpoint,below=2pt] {$V_4$};

      \foreach \s in {-1,...,6} {
        \draw[awstep] (0,\s) -- (1,\s);
      }
      \foreach \i in {0,...,3} {
        \pgfmathtruncatemacro{\Xheight}{6-2*\i}
        \pgfmathtruncatemacro{\Yheight}{5-2*\i}
        \node[above,inner sep=1pt] at (0.5,\Xheight) {$X_{\i}$};
        \node[above,inner sep=1pt] at (0.5,\Yheight) {$Y_{\i}$};
      }
    \end{scope}

    \begin{scope}[shift={(4,0)}]
      \draw[awlane] (0,-1) -- (0,6);
      \draw[awlane] (1,-1) -- (1,6);
      \fill (0,6) circle (2.2pt)
        node[awendpoint,above left=2pt] {$U_3$};
      \fill (0,-1) circle (2.2pt)
        node[awendpoint,below=2pt] {$V_3$};
      \fill (1,-1) circle (2.2pt)
        node[awendpoint,below=2pt] {$V_4$};

      \foreach \s in {-1,...,6} {
        \draw[awstep] (0,\s) -- (1,\s);
      }
      \foreach \i in {0,...,3} {
        \pgfmathtruncatemacro{\Xheight}{6-2*\i}
        \pgfmathtruncatemacro{\Yheight}{5-2*\i}
        \node[above,inner sep=1pt] at (0.5,\Xheight)
          {$X_{\i,\textcolor{red!70!black}{\Xheight}}$};
        \node[above,inner sep=1pt] at (0.5,\Yheight)
          {$Y_{\i,\textcolor{red!70!black}{\Yheight}}$};
      }
    \end{scope}

    \begin{scope}[shift={(8,0)}]
      \draw[awlane] (0,-1) -- (0,6);
      \draw[awlane] (1,-1) -- (1,6);
      \fill (0,6) circle (2.2pt)
        node[awendpoint,above left=2pt] {$U_3$};
      \fill (0,-1) circle (2.2pt)
        node[awendpoint,below=2pt] {$V_3$};
      \fill (1,-1) circle (2.2pt)
        node[awendpoint,below=2pt] {$V_4$};

      \foreach \s in {-1,...,6} {
        \draw[awstep] (0,\s) -- (1,\s);
      }
      \foreach \i in {0,...,3} {
        \pgfmathtruncatemacro{\Xheight}{6-2*\i}
        \node[above,inner sep=1pt] at (0.5,\Xheight)
          {$X_{\i,\textcolor{red!70!black}{\Xheight}}$};
      }
      \foreach \i in {0,...,2} {
        \pgfmathtruncatemacro{\Yheight}{5-2*\i}
        \node[above,inner sep=1pt] at (0.5,\Yheight)
          {$Y_{\i,\textcolor{red!70!black}{\Yheight}}$};
      }
      \node[above,inner sep=1pt] at (0.5,-1) {$Y_3^{(4)}$};
    \end{scope}

    \node at (2.5,2.5) {\( \Longrightarrow \)};
    \node at (6.5,2.5) {\( \Longrightarrow \)};
  \end{tikzpicture}
  \caption{Modification of the weights in the last column when \(n=4\).}
  \label{fig:first_lift_one_line}
\end{figure}

Next, we considered the second-to-last column between \( x=n-2 \) and
\( x=n-1 \). Observe that for the last column, the decoration indices
(represented with red letters) decrease by \( 1 \) as the height of an
east step decreases by \( 1 \). Motivated by this example, we expected
that the weights of the east steps from top to bottom in the
second-to-last column would be
\[
  X_{0,N}, Y_{0,N-1}, X_{1,N-2}, Y_{1,N-3},\dots, X_{n-2,N-2n+4}, Y^{(n)}_{n-2}
\]
for some choices of \( N \) and \( Y^{(n)}_{n-2} \). We found that the
most suitable choice of $N$ should be \( N=2n-3 \), which gave the
minimal number of terms in the difference
$\widetilde{\nu}^{Z}_{n,n-2}-\sum_{\pi\in \mathcal{P}_S(U_{n-2}\to V_n)}\wt(\pi)$.
Note that once \( N \) is fixed, the weight \( Y^{(n)}_{n-2} \) is
uniquely determined and is given by
\[
  Y^{(n)}_{n-2} = \frac{[n-1]_q}{[2]_q}
  (\xi\alpha+\gamma q^{n-2})(\beta+\delta q+q).
\]
The resulting modification of the weights in the last two columns is
shown in \Cref{fig:first_lift_two_columns}.

\begin{figure}
  \centering
  \begin{tikzpicture}[x=1.45cm,y=0.65cm]
    \begin{scope}
      \foreach \k in {2,...,4} {
        \ifnum\k<4
          \draw[awlane] (\k,-1) -- (\k,2*\k);
          \fill (\k,2*\k) circle (2.2pt)
            node[awendpoint,above left=2pt] {$U_{\k}$};
        \else
          \draw[awlane] (\k,-1) -- (\k,6);
        \fi
        \fill (\k,-1) circle (2.2pt)
          node[awendpoint,below=2pt] {$V_{\k}$};
      }
      \foreach \r in {2,3} {
        \pgfmathtruncatemacro{\TopHeight}{2*\r}
        \foreach \s in {-1,...,\TopHeight} {
          \draw[awstep] (\r,\s) -- (\r+1,\s);
        }
        \foreach \i in {0,...,\r} {
          \pgfmathtruncatemacro{\Xheight}{2*\r-2*\i}
          \pgfmathtruncatemacro{\Yheight}{\Xheight-1}
          \node[above,inner sep=1pt] at (\r+0.5,\Xheight) {$X_{\i}$};
          \node[above,inner sep=1pt] at (\r+0.5,\Yheight) {$Y_{\i}$};
        }
      }
    \end{scope}

    \begin{scope}[shift={(4,0)}]
      \foreach \k in {2,...,4} {
        \ifnum\k<4
          \draw[awlane] (\k,-1) -- (\k,2*\k);
          \fill (\k,2*\k) circle (2.2pt)
            node[awendpoint,above left=2pt] {$U_{\k}$};
        \else
          \draw[awlane] (\k,-1) -- (\k,6);
        \fi
        \fill (\k,-1) circle (2.2pt)
          node[awendpoint,below=2pt] {$V_{\k}$};
      }
      \foreach \r in {2,3} {
        \pgfmathtruncatemacro{\TopHeight}{2*\r}
        \foreach \s in {-1,...,\TopHeight} {
          \draw[awstep] (\r,\s) -- (\r+1,\s);
        }
      }
      \node[above,inner sep=1pt] at (2.5,4)
        {$X_{0,\textcolor{red!70!black}{N}}$};
      \node[above,inner sep=1pt] at (2.5,3)
        {$Y_{0,\textcolor{red!70!black}{N-1}}$};
      \node[above,inner sep=1pt] at (2.5,2)
        {$X_{1,\textcolor{red!70!black}{N-2}}$};
      \node[above,inner sep=1pt] at (2.5,1)
        {$Y_{1,\textcolor{red!70!black}{N-3}}$};
      \node[above,inner sep=1pt] at (2.5,0)
        {$X_{2,\textcolor{red!70!black}{N-4}}$};
      \node[above,inner sep=1pt] at (2.5,-1) {$Y_2^{(4)}$};
      \foreach \i in {0,...,3} {
        \pgfmathtruncatemacro{\Xheight}{6-2*\i}
        \node[above,inner sep=1pt] at (3.5,\Xheight)
          {$X_{\i,\textcolor{red!70!black}{\Xheight}}$};
      }
      \foreach \i in {0,...,2} {
        \pgfmathtruncatemacro{\Yheight}{5-2*\i}
        \node[above,inner sep=1pt] at (3.5,\Yheight)
          {$Y_{\i,\textcolor{red!70!black}{\Yheight}}$};
      }
      \node[above,inner sep=1pt] at (3.5,-1) {$Y_3^{(4)}$};
    \end{scope}

    \begin{scope}[shift={(8,0)}]
      \foreach \k in {2,...,4} {
        \ifnum\k<4
          \draw[awlane] (\k,-1) -- (\k,2*\k);
          \fill (\k,2*\k) circle (2.2pt)
            node[awendpoint,above left=2pt] {$U_{\k}$};
        \else
          \draw[awlane] (\k,-1) -- (\k,6);
        \fi
        \fill (\k,-1) circle (2.2pt)
          node[awendpoint,below=2pt] {$V_{\k}$};
      }
      \foreach \r in {2,3} {
        \pgfmathtruncatemacro{\TopHeight}{2*\r}
        \foreach \s in {-1,...,\TopHeight} {
          \draw[awstep] (\r,\s) -- (\r+1,\s);
        }
        \foreach \i in {0,...,\r} {
          \pgfmathtruncatemacro{\Xheight}{2*\r-2*\i}
          \pgfmathtruncatemacro{\Xdecoration}{\r+3-2*\i}
          \node[above,inner sep=1pt] at (\r+0.5,\Xheight)
            {$X_{\i,\textcolor{red!70!black}{\Xdecoration}}$};
        }
        \pgfmathtruncatemacro{\LastYindex}{\r-1}
        \foreach \i in {0,...,\LastYindex} {
          \pgfmathtruncatemacro{\Yheight}{2*\r-2*\i-1}
          \pgfmathtruncatemacro{\Ydecoration}{\r+2-2*\i}
          \node[above,inner sep=1pt] at (\r+0.5,\Yheight)
            {$Y_{\i,\textcolor{red!70!black}{\Ydecoration}}$};
        }
        \node[above,inner sep=1pt] at (\r+0.5,-1) {$Y_{\r}^{(4)}$};
      }
    \end{scope}

    \node at (5,2.5) {\( \Longrightarrow \)};
    \node at (9,2.5) {\( \Longrightarrow \)};
  \end{tikzpicture}
  \caption{Modification of the weights in the last two columns when \(n=4\).}
  \label{fig:first_lift_two_columns}
\end{figure}

At this point, the decoration indices in the last two columns show a
clear pattern: they decrease by \( 1 \) from top to bottom and
from left to right. Computer experiments confirmed that this pattern continued
as shown in \Cref{fig:first_lift_2_vertical}.

\begin{figure}
  \centering
  \begin{tikzpicture}[x=1.45cm,y=0.65cm]
    \def\N{4}
    \pgfmathtruncatemacro{\LastColumn}{\N-1}
    \begin{scope}
      \foreach \k in {0,...,\N} {
        \pgfmathtruncatemacro{\Ysink}{2*\k}
        \ifnum\k>0
          \draw[awlane] (\k,-1) -- (\k,\Ysink);
        \fi
        \fill (\k,\Ysink) circle (2.2pt)
        node[awendpoint,above left=2pt] {\(U_{\k}\)};
      }

      \foreach \r in {0,...,\LastColumn} {
        \pgfmathtruncatemacro{\Smax}{2*\r}
        \foreach \s in {0,...,\Smax} {
          \draw[awstep] (\r,\s) -- (\r+1,\s);
        }
      }
      \draw[awstep] (0, 0) -- (0, -1) -- (\N, -1);
      
      \foreach \r in {0,...,\LastColumn} {
        \pgfmathtruncatemacro{\Smax}{\r}
        \foreach \s in {0,...,\Smax} {
          \pgfmathtruncatemacro{\ind}{\r - \s}
          \node[above,inner sep=1pt] at (\r + 0.5, 2*\s) {\( X_{\ind} \)};
        }
      }
      \foreach \r in {0,...,\LastColumn} {
        \pgfmathtruncatemacro{\Smax}{\r}
        \foreach \s in {0,...,\Smax} {
          \pgfmathtruncatemacro{\ind}{\r - \s}
          \node[above,inner sep=1pt] at (\r + 0.5, 2*\s - 1) {\( Y_{\ind} \)};
        }
      }
      
      \foreach \i in {0,...,\N} {
        \fill (\i,-1) circle (2.2pt)
        node[awendpoint,below=2pt] {\(V_{\i}\)};
      }
    \end{scope}
    \begin{scope}[shift = {(6, 0)}]
      \foreach \k in {0,...,\N} {
        \pgfmathtruncatemacro{\Ysink}{2*\k}
        \draw[awlane] (\k,-1) -- (\k,\Ysink);
        \fill (\k,\Ysink) circle (2.2pt)
        node[awendpoint,above left=2pt] {\(U_{\k}\)};
      }

      \foreach \r in {0,...,\LastColumn} {
        \pgfmathtruncatemacro{\Smax}{2*\r}
        \foreach \s in {-1,...,\Smax} {
          \draw[awstep] (\r,\s) -- (\r+1,\s);
        }
      }

      \foreach \r in {0,...,\LastColumn} {
        \foreach \s in {0,...,\r} {
          \pgfmathtruncatemacro{\ind}{\r-\s}
          \pgfmathtruncatemacro{\Xdecoration}{\N-\r+2*\s-1}
          \node[above,inner sep=1pt] at (\r+0.5,2*\s)
            {\(X_{\ind,\Xdecoration}\)};
          \ifnum\s=0
            \node[above,inner sep=1pt] at (\r+0.5,-1)
              {\(Y^{(\N)}_{\r}\)};
          \else
            \pgfmathtruncatemacro{\Ydecoration}{\Xdecoration-1}
            \node[above,inner sep=1pt] at (\r+0.5,2*\s-1)
              {\(Y_{\ind,\Ydecoration}\)};
          \fi
        }
      }
      
      \foreach \i in {0,...,\N} {
        \fill (\i,-1) circle (2.2pt)
        node[awendpoint,below=2pt] {\(V_{\i}\)};
      }
    \end{scope}
    \node at (5, 3.5) {\( \Longrightarrow \)};
  \end{tikzpicture}
  \caption{The modification of the weights when \(n=4\).}
  \label{fig:first_lift_2_vertical}
\end{figure}

As before, once the modified weights \( X_{i,\ell} \) and
\( Y_{i,\ell} \) are chosen, the weights \( Y_r^{(n)} \) are then
uniquely determined:
\[
  Y_r^{(n)}
  =\frac{[r+1]_q}{[n-r]_q}(\xi\alpha+\gamma q^r)
    \bigl(q^{n-r-1}+[n-r-1]_q(\beta+\delta q^{n-r-1})\bigr).
\]
Finally, this provided the following conjectural formula:
\begin{equation}\label{eq:original-conj}
\widetilde{\nu}^{Z}_{n,k} = \sum_{\pi\in \mathcal{P}_S(U_{k}\to V_n)}\wt(\pi),
\end{equation}
where \( \wt(\pi) \) is the product of the weights of the east steps in \( \pi \) and the
weight of the east step \( (r,s)\to (r+1,s) \) is given by
\[
  \wt\bigl((r,s)\to(r+1,s)\bigr)
  =\begin{cases}
    X_{r-t,n-r+2t-1}, & \text{if \(s=2t+1\ge1\)},\\
    Y_{r-t-1,n-r+2t}, & \text{if \(s=2t+2\ge2\)},\\
    Y_r^{(n)}, & \text{if \(s=0\)}.
  \end{cases}
\]

The formula \eqref{eq:original-conj} is equivalent to the formula in
our main theorem (\Cref{thm:ASEP-positivity}). To see this, let
\( V'_r = (r,1) \). Then, by considering the first vertex \(V_i\) on
the \(x\)-axis visited by a path
\(\pi\in \mathcal{P}_S(U_k\to V_n)\), one can see that
\eqref{eq:original-conj} is equivalent to
\begin{equation}\label{eq:7}
  \widetilde{\nu}^{Z}_{n,k} = \sum_{i=k}^{n} \sum_{\pi\in \mathcal{P}_S(U_{k}\to V'_i)}\wt(\pi)
  Y_{i}^{(n)}Y_{i+1}^{(n)} \dots Y_{n-1}^{(n)}.
\end{equation}

On the other hand, by \eqref{eq:W-ASEP-def}, we have
\begin{equation}\label{eq:XY-coordinate-weights}
  w_n^Z(r,2t)
  =X_{r-t,n-r+2t-1},
  \qquad
  w_n^Z(r,2t+1)
  =Y_{r-t-1,n-r+2t}.
\end{equation}
Hence, translating the paths from \(U_k\) to \(V'_i\) down by one unit
gives the paths from \(u_k\) to \(v_i\) in \eqref{eq:G-ASEP-def}:
\[
  \sum_{\pi\in \mathcal{P}_S(U_{k}\to V'_i)}\wt(\pi)
  =\sum_{\pi\in\mathcal{P}(u_k \to v_i)}\wt_n^Z(\pi) = G_n^Z(i,k).
\]
Moreover, by \eqref{eq:T-ASEP-def}, we have
\[
  T^Z_{n,i}
  = \prod_{r=0}^{n-i-1} \frac{[i+r+1]_q}{[r+1]_q}
    \bigl(q^r+[r]_q(\beta+\delta q^r)\bigr)
    (\xi\alpha+\gamma q^{i+r})
  = Y_{i}^{(n)} Y_{i+1}^{(n)} \dots Y_{n-1}^{(n)} .
\]
Therefore, \eqref{eq:7} is precisely \Cref{thm:ASEP-positivity}.

\bibliographystyle{abbrv}

\end{document}